\theoremstyle{plain} 
\newtheorem{thm}{Theorem}[section]
\newtheorem{cor}[thm]{Corollary}
\newtheorem{prop}[thm]{Proposition}
\newtheorem{lem}[thm]{Lemma}
\theoremstyle{definition}
\newtheorem{defi}[thm]{Definition}
\newtheorem{remark}[thm]{Remark}
\newcommand{\cone}{\operatorname{cone}}
\newcommand{\conv}{\operatorname{conv}}
\newcommand{\Span}{\operatorname{span}}
\newcommand\OO{{\operatorname{O}}}
\author[C. Hohlweg]{Christophe~Hohlweg}
\address[Christophe Hohlweg]{Universit\'e du Qu\'ebec \`a Montr\'eal\\
LaCIM et D\'epartement de Math\'ematiques\\ CP 8888 Succ. Centre-Ville\\
Montr\'eal, Qu\'ebec, H3C 3P8\\ Canada}
\email{hohlweg.christophe@uqam.ca}
\urladdr{http://hohlweg.math.uqam.ca}
\author[J.-P.~Pr\'eaux]{Jean-Philippe Pr\'eaux}
\address[Jean-Philippe Pr\'eaux]{Laboratoire d'Analyse, Topologie et Probabilit\'es, UMR CNRS 7373\\ 39 rue Joliot-Curie \\13453 Marseille cedex, France}
\email{jean.philippe.preaux@gmail.com}
\urladdr{http://www.i2m.univ-amu.fr/perso/jean-philippe.preaux}
\author[V. Ripoll]{Vivien~Ripoll}
\address[Vivien Ripoll]{Fakult\"at f\"ur Mathematik, Universit\"at Wien\\ Oskar-Morgenstern-Platz 1\\ 1090 Wien\\Austria}
\email{vivien.ripoll@univie.ac.at}
\urladdr{http://www.normalesup.org/~vripoll}
\title[Limit Set of Root Systems of Coxeter Groups on Lorentzian spaces]{On the Limit Set of Root Systems\\ of Coxeter Groups acting on Lorentzian spaces}
\begin{document}

\begin{abstract}
The notion of limit roots of a Coxeter group $W$ was  recently introduced: they are the accumulation points of directions of roots of a root system for $W$. In the case where the root system lives in a Lorentzian space, i.e.,  $W$ admits a faithful representation as a discrete reflection group of isometries of a hyperbolic space, the accumulation set of any of its orbits is then classically called the limit set of $W$. In this article, we show that the set of limit roots of a Coxeter group~$W$ acting on a Lorentzian space is equal to the limit set of~$W$ seen as a discrete reflection group of hyperbolic isometries. 
\end{abstract}

\keywords{Root system, Coxeter group, limit roots, limit set of discrete group, Lorentzian space, discrete reflection group, Apollonian gasket, hyperbolic Coxeter group, hyperbolic isometries, Kleinian groups}

\date{\today}


 \maketitle


\section{Introduction}\label{se:Intro}

Coxeter groups acting on a Euclidean vector space as a linear reflection group are precisely finite reflection groups~\cite{Bo68,Hu90}. In this case, the relation between reflection hyperplanes and the set of their normal vectors called {\em root system} is well understood and their interplay is the main tool to study those groups. Surprisingly, the duality between roots and reflection hyperplanes is not very well exploited to study other cases of Coxeter groups.  For instance, it is the case for the class of Coxeter groups seen as discrete groups generated by reflections of hyperbolic spaces. 

In order to fill this gap,  the language of {\em limit roots} and {\em imaginary cone} was initiated in~\cite{HoLaRi12,Dy12,DyHoRi13}. The main goal of this article is to translate this language of limit roots and imaginary cones into the language of hyperbolic geometry. As a byproduct, we show that the set of limit roots  is equal to the limit set of the corresponding Coxeter group seen as a discrete reflection group of hyperbolic isometries. Let us describe more precisely the content of this article. 

On the one hand, any Coxeter group $W$ has a representation as a {\em discrete reflection subgroup} of the orthogonal group $\OO_B(V)$, where $V$ is a finite dimensional vector space and $B$ is a symmetric bilinear form. With such a representation of $W$ arises a natural set of vectors $\Phi$ called a {\em root system}, which are unit $B$-normal vectors of the reflection hyperplanes associated to reflections in $W$. The root system $\Phi$ has an empty set of accumulation points, but the projective version $\widehat\Phi$ of $\Phi$, represented on an affine hyperplane $\mathcal H$, has an interesting set of accumulation points $E(\Phi)$ (see for instance Figure~\ref{fig:intro}). The set $E(\Phi)$ is called the {\em  set of limit roots of $\Phi$} and its study was initiated in~\cite{HoLaRi12} and continued in~\cite{Dy12,DyHoRi13}. Among other properties, it was shown that $E(\Phi)$ lies on the  isotropic cone~${{Q=\{v\in V\,|\, B(v,v)=0\}}}$, that $E(\Phi)$ satisfies some fractal-like properties, that the convex hull $\conv(E(\Phi))$ of $E(\Phi)$ in an appropriate chart admits an infinite tiling called the {\em imaginary convex set}, and the action of $W$ on $\widehat\Phi\sqcup\conv(E(\Phi))$ was studied. 
\begin{figure}[!h]
\begin{minipage}[b]{\linewidth}
\centering
\scalebox{1}{\input{FigIntro1.tex}}
\end{minipage}%
\caption{Picture of the first normalized roots of the root system $\Phi$ with diagram in the upper left of the picture and  converging toward the set of limit roots $E(\Phi)$. Under the action of $W$ on $E(\Phi)$, the simple reflection $s_\alpha$ (for a simple root $\alpha$) maps $x\in E(\Phi)$ to $s_\alpha\cdot x$ where $L(\alpha,x)\cap Q=\{x,s_\alpha\cdot x\}$; note that if the line $L(\alpha,x)$ is tangent to $Q$, then $s_\alpha\cdot x = x$.  }
\label{fig:intro}
\end{figure}

On the other hand, when $B$ has signature $(n,1)$, the pair $(V,B)$ is called a {\em Lorentzian $(n+1)$-space} and $Q$ is called the {\em light cone}, see for instance~\cite[Chapter~3]{Ra06}. In this case, $W$ is a discrete reflection subgroup of the group $\mathcal I(\mathbb H^{n})$ of isometries of the hyperbolic $n$-space  $\mathbb H^{n}$ (see~\cite{Ra06}): there is an affine hyperplane~$\mathcal H$ of~$V$ such that $Q\cap \mathcal H$ is a sphere, and the corresponding open ball ($Q^-\cap \mathcal H$) is naturally identified with the {\em projective model}  $\mathbb H^{n}_p$  for $\mathbb H^{n}$  (see details in~\S\ref{se:Hyperbo}). We will explain in \S\ref{se:Coxeter} the relation between the set $E(\Phi)$ of limit roots and the imaginary convex set $Z$ (the projective version of Dyer's imaginary cone from \cite{Dy12}): the closure $\overline{Z}$ is equal to the convex hull of $E(\Phi)$, and the set $E(\Phi)$ is equal to the intersection of $\overline{Z}$ with the boundary of $\mathbb H^{n}_p$, see Figure~\ref{fig:3}.

In the context of hyperbolic geometry, another notion of limit is the {\em limit set of $W$}, denoted by $\Lambda(W)$, which is defined to be the accumulation set of the $W$-orbit of a point $x\in \mathbb H^{n}_p$. The limit set $\Lambda(W)$ is of great interest in the theory of Kleinian groups and its  generalizations, see for instance~\cite{Nic89,MaTa98,Ra06}.  In~\cite{HoLaRi12,DyHoRi13} some examples of the limit roots associated to a Coxeter group, acting as a discrete reflection group on a Lorentzian space, looked like the limit set of a Kleinian group; see for instance the Apollonian circles in Figure~\ref{fig:intro} obtained as limit set of a root system. As a byproduct of our discussion we obtain the following theorem. 

\begin{thm}\label{thm:main} If $(V,B)$ is a Lorentzian space,  the limit set  $E(\Phi)$ of the root system~$\Phi$ is equal to the limit set $\Lambda(W)$.
\end{thm}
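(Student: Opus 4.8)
The plan is to realize both $E(\Phi)$ and $\Lambda(W)$ as accumulation sets of $W$-orbits sitting on the common boundary sphere $Q\cap\mathcal H=\partial\H^n_p$, and to show that this accumulation set does not depend on whether the orbited seed lies inside the ball $\H^n_p$ or outside it on a normalized root. Recall $\Phi=W\cdot\Delta$ with $\Delta$ the finite set of simple roots; writing each positive root as $w\cdot\alpha_s$ ($w\in W$, $\alpha_s\in\Delta$), the set $E(\Phi)$ is exactly the accumulation set of the projected orbits $\{\,\widehat{w\cdot\alpha_s}\mid w\in W,\ \alpha_s\in\Delta\,\}$, and by the results recalled in the introduction these accumulation points already lie on $Q\cap\mathcal H$. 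On the other side, fixing an interior seed $x\in\H^n_p=Q^-\cap\mathcal H$, the set $\Lambda(W)$ is the accumulation set of $\{w\cdot x\mid w\in W\}$, likewise contained in $Q\cap\mathcal H$. Since $\Delta$ is finite I may always extract a subsequence along which $\alpha_s$ is fixed, so everything reduces to comparing, for one fixed spacelike seed $\alpha_s$ and one interior seed $x$, the limits of $\widehat{w_k\cdot\alpha_s}$ and of $w_k\cdot x$ along sequences $(w_k)$ in $W$.

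\textbf{The comparison lemma.}
The core step is an attraction (``north--south'') lemma for sequences of isometries. Using the Cartan ($KAK$) decomposition in $\OO_B(V)$, any sequence $(w_k)$ leaving every finite subset of $W$ can, after extraction, be written $w_k=r_k\,a_k\,r'_k$ with $r_k\to r$ and $r'_k\to r'$ elliptic and $a_k$ a boost of translation length tending to $+\infty$ along a fixed axis. From this one extracts two boundary points $\xi^+,\xi^-\in Q\cap\mathcal H$ such that $w_k\to\xi^+$ locally uniformly on $\overline{\H^n_p}\setminus\{\xi^-\}$; moreover, since $\OO_B(V)$ acts by projective transformations on all of $\mathcal H$, the same attraction $w_k\cdot y\to\xi^+$ persists for exterior seeds $y$ (such as a normalized root) provided $y$ avoids the repelling set of the subsequence. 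Controlling this repelling set is the heart of the matter: the interior seed $x$ automatically avoids $\xi^-$ (the polar of $\xi^-$ being the tangent hyperplane to the sphere, which meets $\overline{\H^n_p}$ only at $\xi^-$), while for the fixed direction $\widehat{\alpha_s}$ a further extraction arranges that it too avoids the repelling configuration.

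\textbf{Conclusion of the two inclusions.}
With the lemma the two inclusions are symmetric. If $\xi\in\Lambda(W)$, choose $(w_k)$ with $w_k\cdot x\to\xi$, extract a north--south subsequence so that $\xi^+=\xi$, and deduce $\widehat{w_k\cdot\alpha_s}\to\xi$, whence $\xi\in E(\Phi)$. Conversely, if $\xi\in E(\Phi)$, choose $(w_k)$ with $\widehat{w_k\cdot\alpha_s}\to\xi$; because $\Phi$ has no accumulation point in $V$, a convergent sequence of distinct normalized roots forces the Euclidean norms $\|w_k\cdot\alpha_s\|$ to tend to infinity, so $(w_k)$ escapes to infinity as a sequence of isometries; extracting a north--south subsequence again gives $\xi^+=\xi$, and attraction of the interior seed yields $w_k\cdot x\to\xi$, whence $\xi\in\Lambda(W)$.

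\textbf{Main obstacle.}
I expect the delicate point to be the extension of the north--south dynamics from the closed ball $\overline{\H^n_p}$ to exterior points, together with the precise identification of the repelling set there, which is not the single point $\xi^-$ but the $B$-polar hyperplane of $\xi^-$, and the verification that the normalized simple roots can be taken off it. A secondary care is that base-point independence be genuinely uniform, i.e.\ that the limit attached to a subsequence be intrinsic to $(w_k)$ rather than to the seed; this is exactly where the explicit eigenvalue/translation-length control from the Cartan decomposition is needed in place of a soft compactness argument.
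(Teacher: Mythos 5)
Your route is genuinely different from the paper's. The paper does not argue dynamically at all: it introduces the imaginary convex set $Z(\Phi)$, the $W$-orbit of the polytope $K=\{v\in\conv(\widehat\Delta)\mid B(v,\alpha)\le 0,\ \forall\alpha\in\Delta\}$, notes that in the weakly hyperbolic case $K$ has non-empty interior and hence meets $\H^n_p$, and then combines two quoted facts: basepoint-independence of $\Lambda(W)$ for discrete groups of hyperbolic isometries, and \cite[Corollary 6.15(c)]{DyHoRi13}, which states that $E(\Phi)=\mathrm{Acc}(W\cdot v)$ for every $v$ in $Z$. The theorem is then immediate. The authors explicitly say they do not know a direct proof using only hyperbolic geometry, which is what you are attempting; your $KAK$/north--south scheme is the natural candidate, and the inclusion $\Lambda(W)\subseteq E(\Phi)$ does go through as you describe: the interior seed avoids the tangent hyperplane at $\xi^-$, and since $\Delta$ spans $V$ at least one simple root satisfies $B(\alpha_s,\xi^-)\neq 0$, so for that direction you are free to pick a suitable $\alpha_s$.

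The gap is in the converse inclusion, and it is exactly the point you flag without resolving. There you do not get to choose $\alpha_s$: the sequence of roots $\beta_k=w_k(\alpha_s)$ converging to $\xi$ is given, and nothing prevents $B(\alpha_s,\xi^-)=0$ for the repelling point $\xi^-$ produced by your extraction; in that case the attraction lemma says nothing about $\widehat{w_k(\alpha_s)}$, and the step ``extracting a north--south subsequence again gives $\xi^+=\xi$'' is unjustified. Two repairs are available. (a) Push the eigenvalue analysis one step further: for $v$ in the repelling hyperplane $(\xi^-)^\perp$ but not proportional to $\xi^-$, every subsequential projective limit of $w_k(v)$ is either $\xi^+$ or a space-like direction (the other candidate limits have the form $c\,\xi^{+}+u$ with $u\neq 0$ in a positive-definite eigenspace, hence $q>0$); since $E(\Phi)\subseteq\widehat Q$ forces $\xi$ to be isotropic, the limit must be $\xi^+$ after all. (b) Bypass the lemma entirely for this inclusion: from $\widehat{\beta_k}\to\xi$ and $q(\beta_k)=1$ one gets $\|\beta_k\|\to\infty$, and for a time-like seed $x$ the reflection formula gives $s_{\beta_k}(x)=x-2B(\beta_k,x)\beta_k$ with $|B(\beta_k,x)|\to\infty$ because $B(\xi,x)\neq 0$; hence $\widehat{s_{\beta_k}(x)}\to\xi$ and $\xi\in\Lambda(W)$ directly. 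Either repair completes your argument; as written, the proposal is incomplete at this one point.
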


A proof of Theorem~\ref{thm:main} follows by reinterpreting the results in \cite{DyHoRi13} into the language of hyperbolic geometry, as explained in details in \S\ref{ss:prooftheorem}. The idea is as follows. Each hyperplane of a reflection in $\mathbb H ^n$ corresponds to a unique space-like vector --- a positive root --- in the ambient Lorentzian space. Normalizing these positive roots, we obtain $E(\Phi)$ as their set of accumulation points. The interior of the convex hull $\conv(E(\Phi))$ of $E(\Phi)$ is contained in $Q^-$. In \cite{DyHoRi13}, it is shown that $E(\Phi)$ is also the accumulation set of the $W$-orbit of any point in $\conv(E(\Phi)) \cap Q^-$. The proof then follows by showing that $\mathbb H^n \cap \conv(E(\Phi))$ is non empty and, therefore, that $E(\Phi)$ is the accumulation set of a $W$-orbit of a point in $\mathbb H^n$.

The proof relies heavily on the equality $\conv(E(\Phi))\cap Q=E(\Phi)$  proven in \cite{DyHoRi13} in the case of $B$ with signature $(n,1)$. The question of the validity of this equality  for $B$ of arbitrary signature and $W$ irreducible (i.e., the Coxeter graph is connected) is still open.

We do not know a direct proof of this statement using only tools from hyperbolic geometry. It is well known that for a point $x$ in the hyperbolic space (i.e., a time-like vector in $V$), the limit set of the orbit of $x$ does not depend on the choice of~$x$ and is (by definition) the set
$\Lambda(W)$. Theorem~\ref{thm:main} implies that this is also the case when replacing $x$ with any root of $\Phi$ (but a root is always a space-like
vector in $V$). These facts open the natural question about what are the limit sets of orbits of other space-like vectors. After a first version of this article appeared on the Arxiv, H.~Chen and J.-P.~Labb\'e showed in~\cite{ChLa14} that the accumulation set of the $W$-orbit of a
space-light vector is not in general contained in the isotropic cone $Q$, and therefore is different from $\Lambda(W)$ and $E(\Phi)$, see~\cite[Figure 2]{ChLa14}.  Even more intriguing is the fact that for a root system living in a quadratic space of arbitrary signature, there may exist \emph{isotropic} vectors that are in the accumulation set of some orbit but that are not limit roots, see \cite[Example 3.11]{ChLa14}.

The last section~\S\ref{ss:univers} of this article is devoted  to an explanation of the example in Figure~\ref{fig:intro} together with its relation with Apollonian gaskets. 

 We aim for this article to be accessible both to the community familiar with reflection groups and root systems and to the community familiar with discrete subgroups of isometries in hyperbolic geometry,  so we will make a point to properly survey the objects and constructions mentioned above. In particular, in \S\ref{see:Vinberg}, we discuss and make precise the different occurrences of the word ``hyperbolic'' in the context of Coxeter groups.

\section{Lorentzian and Hyperbolic Spaces}\label{se:Hyperbo}

The aim of this section is to survey the background we need on hyperbolic geometry. The presentation of the material  in this section is mostly based  on~\cite[Chapter 3 and \S6.1]{Ra06}, see also~\cite[Chapter A]{BP92}, \cite{Vi93g} and~\cite[Chapter 6]{Da08}. 

\smallskip

Let $V$ be a real vector space of dimension~$n+1$ equipped with a symmetric bilinear form~$B$.  We will denote by $q(\cdot)=B(\cdot, \cdot)$ the quadratic form associated to~$B$ and by $Q:=\{x\in V\,|\, q(x)=0\}$ the {\em isotropic cone of} $B$, or equivalently, of $q$.  

\subsection{Lorentzian  spaces}
Suppose from now on that the signature of $B$ is $(n,1)$.  
The pair $(V,B)$ is then called a {\em Lorentzian $(n+1)$-space} and $Q$ is called the {\em light cone}. Moreover, the elements in the set  $Q^-:=\{v\in V\,|\, q(v)<0\}$ are said to be {\em time-like}, while  the elements in $Q^+:=\{v\in V\,|\, q(v)>0\}$ are {\em space-like}\footnote{This vocabulary is inspired from the theory of relativity, where $n=3$.};  see the top picture in Figure~\ref{fig:2} for an illustration.

A {\em Lorentz transformation\footnote{These transformations are
    called {\em $B$-isometries} in~\cite{HoLaRi12,DyHoRi13}, since in
    these articles $B$ does not necessarily have signature~$(n,1)$.} }
is a map on $V$ that preserves $B$. So, in particular, a Lorentz
transformation preserves $Q$, $Q^+$ and $Q^-$. It turns out that
Lorentz transformations are linear isomorphisms on $V$ (indeed, one can
prove that a map preserving a nondegenerate bilinear form is
linear). We denote by $\OO_B(V)$ the set of Lorentz transformations of
$V$:
\[
\OO_B(V):=\{f\in\textrm{GL}(V)\,|\, B(f(u),f(v))=B(u,v),\ \forall u,v\in V\}.
\]
The well-known {\em Cartan-Dieudonn\'e Theorem} states that, since $B$ is non-de\-ge\-ne\-rate, an element of $\OO_B(V)$ is a product of at most {\em $(n+1)$ $B$-reflections}: for  a non-isotropic vector $\alpha \in V\setminus Q$, the \emph{$B$-reflection associated to~$\alpha$} (simply called reflection 
when~$B$ is unambiguous) is defined by the equation\footnote{Observe that if $B$ was positive definite, this equation would  be the usual formula for a Euclidean reflection.}
\begin{equation}
\label{eq:Reflection}
s_{\alpha} (v) = v - 2\frac{B(\alpha,v)}{B(\alpha,\alpha)} \ \alpha , \quad \text{for any   } v\in V. 
\end{equation}
We denote by $\alpha^\perp:=\{v\in V\,|\, B(\alpha,v)=0\}$ the orthogonal of the line $\mathbb R\alpha$ for the form $B$. Since 
${B(\alpha,\alpha)\neq 0}$, we have $\alpha^\perp\oplus \mathbb R \alpha = V$. It is straightforward to check that~$s_\alpha$ fixes 
$\alpha^\perp$ pointwise and that~${s_\alpha(\alpha)=-\alpha}$.

\subsection{Hyperbolic spaces}\label{ss:bases} We fix a basis $\mathcal B=(e_1,\dots,e_{n+1})$ of $V$ such that $q(v)=x_1^2+x_2^2+\dots x_n^2-x^2_{n+1}$, for any $v\in V$ with coordinates $(x_1,\dots,x_{n+1})$ in the basis $\mathcal B$. Equipped with this basis, $V$ is often denoted by $\mathbb{R}^{n,1}$.   The quadratic hypersurface $\{v\in V\,|\, q(v)=-1\}$, called the \emph{hyperboloid}, consists of time-like vectors and has two sheets. It is interesting to note that it is a differentiable surface (as the preimage of a regular value by the differentiable map $q:V\longrightarrow \mathbb{R}$) and is naturally endowed with a Riemannian metric because $B(.,.)$ restricted to the tangent spaces of each sheet is positive definite.
A {\em time-like vector $v$ is positive} if $x_{n+1}>0$; the positive sheet,
\[
\mathbb H^{n}:= \{v\in V\,|\, q(v)=-1\textrm{ and }x_{n+1}>0 \}
 \]
turns out to be a simply connected complete Riemannian manifold with constant sectional curvature equal to $-1$ (cf. \cite[Theorem A.6.7]{BP92}). This is  {\em the hyperboloid model of the  hyperbolic $n$-space}, see Figure~\ref{fig:2}. The distance function $d$ on $\mathbb H^n$ satisfies the equation $\cosh d(x,y)=-B(x,y)$.

\subsubsection{Group of isometries}\label{sss:Isom}
Observe that the group $\OO_B(V)$ acts on the quadratic hypersurface $\left\lbrace v\in V\,|\, q(v)=-1\right\rbrace$. A Lorentz transformation is {\em a positive Lorentz transformation} if it maps time-like positive vectors to time-like positive vectors. So the group $O_B^+(V)$ of positive Lorentz transformations preserves $\mathbb{H}^n$ and its distance, and the group of isometries $\mathcal I(\mathbb H^n)$ of $\mathbb H^n$ is isomorphic to $O_B^+(V)$: any isometry of $\mathbb H^n$ is the restriction to $\mathbb H^n$ of a positive Lorentz transformation. Moreover, it is well known that $\mathcal I(\mathbb H^n)$ is  generated by {\em hyperbolic reflections through hyperbolic hyperplanes}, of which we now recall the definition.

\subsubsection{Hyperbolic reflections}\label{sss:Ref} A linear subspace $F$ of $V$ is said to be {\em time-like} if $F\cap Q^-\not =\varnothing$, otherwise it is called {\em space-like}, or {\em light-like} if it contains an isotropic vector. A {\em hyperbolic hyperplane} is the intersection of $\mathbb H^n$ with a time-like hyperplane of $V$.  Let $H$ be a linear hyperplane in $V$ and $\alpha\in V$ be a normal vector to $H$ for the form $B$. Suppose $\alpha\notin Q$, so that $H\oplus \mathbb R \alpha = V$. Then, since $B$ has signature~$(n,1)$, we obtain that~$H$ is time-like if and only if  $\alpha\in V$ is  a space-like vector.  A reflection $s_\alpha\in \OO_B(V)$ is a {\em hyperbolic reflection} if  $\alpha^\perp=H$ is a time-like hyperplane or, equivalently, if $\alpha$ is a space-like vector of $V$. In this case, $s_\alpha\in\OO^+_B(V)$ and it restricts to an isometry of~$\mathbb H^n$.    

\begin{remark}
The fact that $s_\alpha\in \OO_B^+(V)$ for a space-like vector $\alpha$ follows from the fact that a reflection $s_\alpha$ is continuous and that it exchanges the two sheets (i.e. connected components) of the quadratic surface $\{v\in V\,|\, q(v)=-1\}$ if and only if~$\alpha^\perp$ is space-like, i.e.,  if and only if $\alpha$ is a time-like vector.
\end{remark}


\subsection{The projective model}
\label{ss:projmodel}
 To make clear the link between hyperbolic geometry and  the results of \cite{HoLaRi12,DyHoRi13}, we need to introduce another model for $\mathbb H^n$. Consider the unit open (Euclidean) $n$-ball embedded in the affine hyperplane~${{\mathbb{R}^n\times\{1\}}}$ of $V$:
 \[
 D_1^n=\{v\in V\,|\, x_{n+1}=1\textrm{ and } x_1^2 + \dots +x_n^2<1\}
 \]
 and the map
 $p$ from $D_1^n$ to $\mathbb H^n$, called the {\em  radial projection} 
 \[
p: D_1^n\to \mathbb H^n
\]
 where $p(v)$ is the intersection point of the line $\mathbb Rv$  with $\mathbb H^n$ (see Figure~\ref{fig:2}).  A simple calculation shows that
 \[
 p(v)=\frac{v}{\sqrt{|q(v)|}}.
 \]
The unit ball $D_1^n$ endowed with the pullback metric with respect to $p$, i.e. which makes $p$ an isometry, 
 is a (non conformal) model $\mathbb{H}^n_p$ for $\mathbb H^n$ called the {\em projective ball model}\footnote{This model is also sometimes called the {\em Beltrami-Klein model} in the literature.}, see~\cite[\S6.1]{Ra06}. 
  
   First, observe that using the equation for $q$ in the basis~$\mathcal B$, we have that $D_1^n\subseteq Q^-$.  Let $\mathcal H$ be the affine hyperplane directed by $\Span(e_1,\dots,e_n)$ and passing through the point $e_{n+1}$, then  we get 
 \[
 D_1^n = Q^-\cap \mathcal H,
 \] 
with boundary  $Q\cap \mathcal H$.  The next proposition follows from the previous discussion and \cite[Equation (6.1.2)]{Ra06}.

\begin{prop}
 \label{prop:proj}
  The {\em projective model $\mathbb{H}^n_p$} has underlying space $D_1^n=Q^-\cap \mathcal H$  and its boundary $\partial \mathbb{H}^n_p$ is $Q\cap \mathcal H$. Moreover, $p:\mathbb{H}^n_p \to \mathbb H^n$ is an isometry whose inverse is
  \[
  p^{-1}(v)=\frac{v}{x_{n+1}}=(x_1/x_{n+1},\dots,x_n/x_{n+1},1). 
  \]
  \end{prop}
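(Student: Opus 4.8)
The plan is to unwind the definitions and carry out three short computations: identify the underlying set together with its boundary, check that the radial projection is a well-defined bijection onto $\mathbb{H}^n$, and verify the inverse formula by direct substitution. The isometry assertion itself requires no separate argument, since by construction $D_1^n$ is endowed precisely with the metric pulled back along $p$.

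First I would establish the set-theoretic identities. Restricting $q$ to the affine hyperplane $\mathcal{H}=\{v\in V\mid x_{n+1}=1\}$ and using $q(v)=x_1^2+\dots+x_n^2-x_{n+1}^2$, I get $q(v)=x_1^2+\dots+x_n^2-1$ for $v\in\mathcal{H}$. Hence $q(v)<0$ is equivalent to $x_1^2+\dots+x_n^2<1$, which is exactly the defining condition of $D_1^n$; this gives $D_1^n=Q^-\cap\mathcal{H}$, as already noted just before the statement. Likewise $q(v)=0$ on $\mathcal{H}$ is equivalent to $x_1^2+\dots+x_n^2=1$, which is the topological boundary of the open ball $D_1^n$ inside $\mathcal{H}$, so $\partial\H^n_p=Q\cap\mathcal{H}$.

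Next I would check that $p$ is a well-defined bijection and recover its inverse. For $v\in D_1^n\subseteq Q^-$ the whole line $\mathbb{R}v$ is time-like, so it meets the two-sheeted hyperboloid $\{q=-1\}$ in precisely the two points $\pm v/\sqrt{-q(v)}$; since $v$ has positive last coordinate, the point lying on the positive sheet $\mathbb{H}^n$ is $p(v)=v/\sqrt{|q(v)|}$, and one verifies directly that $q(p(v))=q(v)/(-q(v))=-1$ with positive last coordinate. To produce the inverse, set $\iota(v)=v/x_{n+1}$ for $v\in\mathbb{H}^n$; its last coordinate equals $1$, so $\iota(v)\in\mathcal{H}$, and from $q(v)=-1$ one computes $q(\iota(v))=q(v)/x_{n+1}^2=-1/x_{n+1}^2<0$, placing $\iota(v)$ in $Q^-\cap\mathcal{H}=D_1^n$. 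A short substitution, using $x_{n+1}>0$ on $\mathbb{H}^n$ and $q(v)<0$ on $D_1^n$, then shows $p\circ\iota=\mathrm{id}_{\mathbb{H}^n}$ and $\iota\circ p=\mathrm{id}_{D_1^n}$, so $p$ is a bijection and $p^{-1}=\iota$ is the claimed formula.

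There is no serious obstacle here; the argument is entirely elementary. The only point demanding care is the sign and sheet bookkeeping: one must keep track that $|q(v)|=-q(v)$ on the time-like cone, that the positive last coordinate selects the correct sheet of the hyperboloid, and that $\iota(v)$ genuinely lands inside the open ball $D_1^n$ rather than merely on $\mathcal{H}$. Finally, the statement that $p$ is an isometry is immediate from the definition of $\H^n_p$ as $D_1^n$ equipped with the pullback metric; matching this with the explicit metric of \cite[Equation (6.1.2)]{Ra06} is the routine verification that the proposition invokes rather than reproves.
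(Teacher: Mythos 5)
Your proposal is correct and follows essentially the same route as the paper, which simply notes that the proposition ``follows from the previous discussion'' (where $D_1^n=Q^-\cap\mathcal H$, the formula $p(v)=v/\sqrt{|q(v)|}$, and the pullback metric are already in place) together with the cited equation from Ratcliffe. You merely make explicit the sign and sheet bookkeeping and the verification that $\iota(v)=v/x_{n+1}$ is a two-sided inverse, all of which is consistent with what the paper leaves implicit.
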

This proposition is illustrated for $n+1=2$ and $n+1=3$ in Figure~\ref{fig:2}.

\begin{figure}[!h]
\begin{minipage}[b]{\linewidth}
\centering
\includegraphics[scale=0.6]{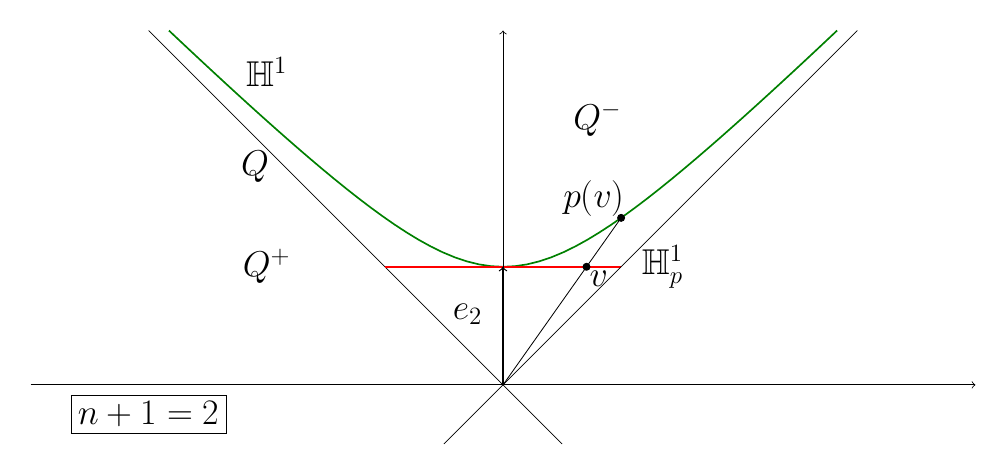}
\includegraphics[scale=1]{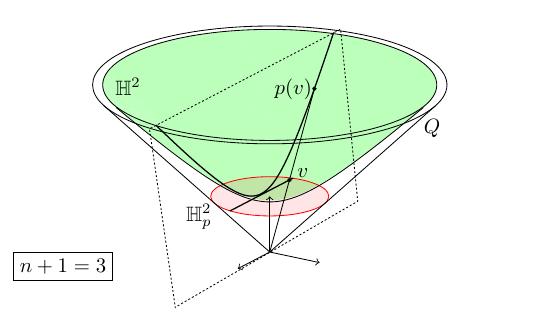}

\end{minipage}%
\caption{Pictures of Lorentzian spaces of dimension $n+1=2$ and $n+1=3$ with the hyperbolic spaces $\mathbb H^n$, the projective model $\mathbb{H}^n_p$, the radial projection $p$ and a time-like plane $H$ cutting $\mathbb{H}^2$ and $\mathbb{H}^2_p$.}
\label{fig:2}
\end{figure}

\subsubsection{Hyperplanes, reflections and isometries} The projective model gives us an easy description of hyperplanes: a {\em hyperbolic hyperplane in $\mathbb{H}_p^n$} is simply the intersection of a time-like linear hyperplane of $V$ with $\mathbb{H}_p^n$. Let $\mathcal I(\mathbb{H}_p^n)$ be the group of isometries of $\mathbb{H}_p^n$. 

\begin{cor}\label{cor:action}
\label{prop:projisom} 
The conjugation by $p$ is an isomorphism from $\mathcal I(\mathbb{H}^n)$ to $\mathcal I(\mathbb H_p^n)$: for $\varphi\in \mathcal I(\mathbb{H}^n)$ and a point $v\in \mathbb{H}_p^n$,  $\varphi\cdot v:=p^{-1}\circ \varphi\circ p(v) $ defines the isometric action of $\varphi$ on~$\mathbb{H}_p^n$.  Moreover   
$
\varphi\cdot v 
$
 is the intersection point of the linear line $\mathbb R \varphi(v)$  with the ball $D_1^n$. 
 
 In particular, if  $\alpha\in V$ is a space-like vector, then  $s_{\alpha}\cdot v=p^{-1} \circ s_\alpha  \circ p (v)$ is the hyperbolic reflection in $\mathcal I(\mathbb{H}_p^n)$ of $v$ through the time-like hyperplane $\alpha^\perp$. 
\end{cor}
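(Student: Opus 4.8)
The plan is to treat the three assertions in turn: the first is a general fact about isometries, the second a direct computation, and the third a specialization of the second.

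For the isomorphism claim I would invoke the general principle that whenever $f\colon X\to Y$ is an isometry of metric spaces, conjugation $\varphi\mapsto f^{-1}\circ\varphi\circ f$ is a group isomorphism $\mathcal I(Y)\to\mathcal I(X)$, with inverse $\psi\mapsto f\circ\psi\circ f^{-1}$. Applying this with $f=p$, which is an isometry by Proposition~\ref{prop:proj}, immediately gives that $\varphi\cdot v:=p^{-1}\circ\varphi\circ p(v)$ defines an isometric action of $\mathcal I(\mathbb H^n)$ on $\H^n_p$ and that $\varphi\mapsto(v\mapsto\varphi\cdot v)$ is an isomorphism onto $\mathcal I(\H^n_p)$.

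For the geometric description I would first recall from \S\ref{sss:Isom} that every $\varphi\in\mathcal I(\mathbb H^n)$ is the restriction to $\mathbb H^n$ of a positive Lorentz transformation, so that $\varphi$ extends to a linear automorphism of $V$; this is what gives meaning to the line $\R\varphi(v)$ for $v\in D_1^n$. Then I would simply compute using $p(v)=v/\sqrt{|q(v)|}$ and $p^{-1}(w)=w/x_{n+1}(w)$ from Proposition~\ref{prop:proj}. The crucial feature is that both maps are radial, i.e. invariant under multiplication by a positive scalar. Since $\varphi$ is linear,
$$
\varphi\circ p(v)=\frac{1}{\sqrt{|q(v)|}}\,\varphi(v),
$$
a positive multiple of $\varphi(v)$, so applying $p^{-1}$ the normalizing scalar cancels and one obtains $\varphi\cdot v=\varphi(v)/x_{n+1}(\varphi(v))$. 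This is precisely the point of the line $\R\varphi(v)$ whose last coordinate equals $1$, that is, its intersection with $\mathcal H$; since the composition lands in $D_1^n$ by construction, this intersection point lies in the open ball, as claimed.

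Finally, for the statement about reflections I would specialize to $\varphi=s_\alpha$ with $\alpha$ space-like. By \S\ref{sss:Ref}, $s_\alpha$ lies in $\OO_B^+(V)$ and restricts on $\mathbb H^n$ to the hyperbolic reflection across the time-like hyperplane $\alpha^\perp$. Because $\alpha^\perp$ is a linear subspace and $p$ is radial, $p$ carries $\alpha^\perp\cap D_1^n$ onto $\alpha^\perp\cap\mathbb H^n$; hence conjugation by $p$ sends this hyperbolic reflection to an involutive isometry of $\H^n_p$ fixing $\alpha^\perp\cap D_1^n$ pointwise, which is by definition the hyperbolic reflection of $\H^n_p$ across $\alpha^\perp$. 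I do not foresee a genuine obstacle here: the only point demanding care is the dual role of $\varphi$ as both an isometry of $\mathbb H^n$ and its linear Lorentz extension on $V$, together with the observation that the homogeneity of $p$ and $p^{-1}$ forces the normalizing scalars to cancel, so that the abstract conjugated action coincides with the concrete projective formula.
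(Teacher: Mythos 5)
Your proposal is correct and follows essentially the same route as the paper: the isomorphism part is the general conjugation-by-an-isometry principle applied to $p$ from Proposition~\ref{prop:proj}, and the ``moreover'' part is exactly the paper's computation, using that $\varphi$ extends to a (linear) positive Lorentz transformation and that the normalizing scalar $1/\sqrt{|q(v)|}$ cancels under the homogeneous map $p^{-1}$, yielding $\varphi\cdot v=\varphi(v)/y_{n+1}\in Q^-\cap\mathcal H=D_1^n$. Your extra remark on the reflection case (that $p$ carries $\alpha^\perp\cap D_1^n$ to $\alpha^\perp\cap\mathbb H^n$ by radiality) is a slight elaboration of what the paper leaves implicit, but it is the same argument.
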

\begin{proof}  The property of the conjugation by $p$ follows immediately from Proposition~\ref{prop:proj}. For the characterization of $\varphi \cdot v$ as the intersection of $\mathbb R \varphi(v)$ and $D_1^n$, note that $\varphi$ acts on $V$ linearly, and $p$ preserve directions. So $\varphi \cdot v$ is colinear with~$\varphi(v)$. Moreover by definition $\varphi \cdot v$ lies also in $\mathbb H_p^n$, i.e., in $D_1^n$.
\end{proof}

\subsection{Limit sets of discrete groups of hyperbolic isometries}\label{ss:LimitSet}

The notion of \emph{limit set} is central to study the
dynamics of discrete groups of hyperbolic isometries: it provides an
interesting topological space on which the group naturally acts, and
the properties of the limit set characterize some properties of the
group. We recall below the definition and some basic features of limit
sets; see for example \cite[\S12.1]{Ra06}, \cite[\S2]{Vi93} or \cite[\S1.4]{Nic89} for
details.

Given a hyperbolic isometry $\varphi$ of $\mathbb{H}_p^n$ (with underlying space $D_1^n$),
 we extend the action of $\varphi$ to the closed ball
$\overline{D_1^n}$. Note that $\varphi$ preserves the
boundary $\partial \mathbb{H}^n_p=Q\cap \mathcal H$ of $\mathbb{H}_p^n$.  

 Let $\Gamma\subseteq \mathcal I(\mathbb{H}_p^n)$ be a discrete group of hyperbolic isometries. For $x$ a point in the closed ball $\overline{D_1^n}$, the following are equivalent:
\begin{itemize}
\item $x$ is an accumulation point of the orbit $\Gamma\cdot x_0$ for some $x_0\in \mathbb{H}_p^n$;
\item $x$ is an accumulation point of the orbit $\Gamma\cdot x_0$ for any $x_0\in \mathbb{H}_p^n$;
\item $x$ is in $\partial \mathbb{H}^n_p\cap \overline{\Gamma\cdot x_0}$ for some $x_0\in \mathbb{H}_p^n$.
\end{itemize}
Such a point is called a \emph{limit point} of $\Gamma$.

\begin{defi}  The set $\Lambda(\Gamma)$ of limit points is called \emph{the limit set of $\Gamma$}.
\end{defi}

The fact that an orbit has no accumulation points inside the open ball $D_1^n$ is clear, since the group is discrete and $\mathcal{I}(\mathbb{H}_p^n)$ acts properly on $\mathbb{H}_p^n$. The fact that the limit set of an orbit does not depend on the chosen point follows from the relation between hyperbolic and Euclidean distances, see \cite[Theorem~12.1.2]{Ra06}.

\smallskip

The limit set $\Lambda(\Gamma)$ is clearly closed and $\Gamma$-stable. Many general properties are
known for limit sets of discrete groups of hyperbolic isometries. For example, either $\Lambda(\Gamma)$ is finite, in which
case $|\Lambda(\Gamma)|\leq 2$
, see \cite[Theorem~12.2.1]{Ra06}, or 
$\Lambda(\Gamma)$ is uncountable and the action of $\Gamma$ on $\Lambda(\Gamma)$ is minimal. In \cite{DyHoRi13}, the authors show an analogous property  for the set of limit roots of a root system, which we will define below.

\section{Coxeter Groups and Hyperbolic Geometry }\label{se:Coxeter}

The aim of this section is to translate constructions and results related to root systems, and that are taken out from~\cite{Dy12,HoLaRi12,DyHoRi13}, into the language of hyperbolic geometry. This naturally leads  to a proof of Theorem~\ref{thm:main}.

\smallskip
 
 Recall that a Coxeter system $(W,S)$ is such that $S\subseteq W$ is a set of generators for the Coxeter group $W$, subject only to relations of the form $(st)^{m_{s,t}}=1$, where $m_{s,t}\in \mathbb N^* \cup \{\infty\}$ is attached to each pair of generators $s,t\in S$, with 
$m_{s,s}=1$ and $m_{s,t}\geq 2$ for $s\neq t$. We write $m_{s,t}=\infty$ if the product $st$ is of infinite order. In the following, 
we always suppose that the set of generators $S$ is finite. If all the $m_{s,t}$, for $s\not = t$  are $\infty$ then we say that $W$ is a {\em universal Coxeter group}.

It turns out that any Coxeter group can be  represented as a {\em discrete reflection subgroup} of $\OO_B(V)$ for a certain pair $(V,B)$: Coxeter groups are  the discrete reflection groups associated to {\em based root systems}~\cite[Theorem 2]{Vi71} (see~\cite[\S1 and Theorem 1.2.2]{Kr09} for a recent exposition of this result).
\medskip


\subsection{Based root systems and geometric representations}\label{subsection:basedrootsystems}
 Let us cover the basics on based root systems (see for instance~\cite[\S1]{HoLaRi12} for more details). A {\em simple system}  $\Delta$ is a finite subset of  $V$ such that:
\begin{enumerate}[(i)]
\item $\Delta$ is positively independent: if $\sum_{\alpha \in
    \Delta} \lambda_{\alpha} \alpha =0$ with all $\lambda_\alpha \geq
  0$, then all $\lambda_\alpha=0$;
\item for all $\alpha, \beta \in \Delta$, with $\alpha \neq \beta$,
  $\displaystyle{B(\alpha,\beta) \in \ (-\infty,-1] \cup
    \{-\cos\left(\frac{\pi}{k}\right), k\in \mathbb Z_{\geq 2} \} }$;
\item for all $\alpha \in \Delta$, $B(\alpha,\alpha)=1$.
\end{enumerate}
 Denote by $S:=\{s_\alpha \,|\, \alpha \in \Delta\}$ the set of~$B$-reflections 
associated to elements in $\Delta$. Let $W$ be the subgroup of $\OO_B(V)$ generated by 
$S$, and $\Phi=W(\Delta)$ be the orbit of~$\Delta$ under the action of~$W$. 

The pair $(\Phi,\Delta)$ is called a \emph{based root system in $(V,B)$}. For simplification we will often use simply the term {\em root system}. Its set of {\em positive roots} is $\Phi^+:=\cone(\Delta)\cap \Phi$, and like for classical root systems, we have the property $\Phi=\Phi^+ \sqcup (-\Phi^+)$. The {\em rank} of $\Phi$ is the cardinality of~$\Delta$, i.e., the cardinality  of $S$.   Vinberg~\cite[Theorem 2]{Vi71} shows that $(W,S)$ is always a Coxeter system and $W$ is a discrete reflection group in~$\OO_B(V)$. Such a representation of a Coxeter group is called {\em a geometric representation}. Conversely, it is well known that any (finitely generated) Coxeter group can be geometrically represented with a root system, see for instance \cite[Chapter 5]{Hu90}.

\subsection{Representations as discrete reflection groups of hyperbolic isometries}
In this subsection, we restrict to the case where $(V,B)$ is a Lorentzian $(n+1)$-space. A geometric representation of a Coxeter group $W$ as a discrete subgroup of $\OO_B(V)$ then yields a faithful representation of~$W$ as a discrete subgroup of isometries of $\mathbb{H}^n$ that is generated by hyperbolic reflections. Therefore, by conjugation by the radial projection (Corollary~\ref{prop:projisom}), it also provides a faithful representation of~$W$ as a discrete subgroup of $\mathcal I(\mathbb{H}_p^n)$ generated by reflections.

The key is to observe that $W\subseteq \OO_B(V)$ is in fact a subgroup of $\OO^+_B(V)$, the group of positive Lorentz transformations: from \S\ref{sss:Isom}, we know that $\OO^+_B(V)$ is isomorphic to $\mathcal I(\mathbb{H}^n)$  by restriction to $\mathbb{H}^n$. 

\begin{prop}\label{prop:repHypo} Let $(\Phi,\Delta)$ be a based root system in the Lorentzian $(n+1)$-space $(V,B)$ with associated Coxeter system $(W,S)$.  Then $W\subseteq \OO^+_B(V)$ and this geometric action of $W$ on the $(n+1)$-Lorentzian space preserves $\mathbb{H}^n$. This yields a restricted representation of $W$  on $\mathcal{I}(\mathbb{H}^n)$ that is faithful and discrete. Consequently,  the projective action\footnote{From Corollary~\ref{cor:action}.} of $W$ on $\mathbb{H}_p^n$ is also faithful and discrete. 

Moreover, the action of $W$ on $\mathbb{H}^n$ (resp. $\mathbb{H}^n_p$) is  generated by reflections through the hyperbolic hyperplanes $\alpha^\perp\cap \mathbb{H}^n$ (resp.  $\alpha^\perp\cap \mathbb{H}^n_p$) for all $\alpha\in \Delta$.
\end{prop}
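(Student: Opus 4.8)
The plan is to observe that every asserted property transfers mechanically along the two isomorphisms $\OO_B^+(V)\xrightarrow{\sim}\mathcal I(\H^n)$ (restriction to $\H^n$, from \S\ref{sss:Isom}) and $\mathcal I(\H^n)\xrightarrow{\sim}\mathcal I(\H^n_p)$ (conjugation by $p$, from Corollary~\ref{cor:action}), so the only genuinely new thing to check is the inclusion $W\subseteq \OO_B^+(V)$. First I would note that condition~(iii) in the definition of a simple system forces $B(\a,\a)=1>0$ for every $\a\in\Delta$, so each simple root is a space-like vector. By the remark in \S\ref{sss:Ref}, a reflection $s_\a$ exchanges the two sheets of $\{v\in V\mid q(v)=-1\}$ if and only if $\a$ is time-like; since our simple roots are space-like, each generator $s_\a\in S$ preserves each sheet and therefore lies in $\OO_B^+(V)$. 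As $\OO_B^+(V)$ is a subgroup of $\OO_B(V)$ and $W=\langle S\rangle$ is generated by these $s_\a$, we conclude $W\subseteq\OO_B^+(V)$. Because $\OO_B^+(V)$ preserves the positive sheet $\H^n$, the geometric action of $W$ preserves $\H^n$, which produces the restricted representation $W\to\mathcal I(\H^n)$.

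Next I would establish faithfulness and discreteness. Faithfulness of the geometric representation $W\hookrightarrow\OO_B(V)$ is exactly Vinberg's theorem \cite{Vi71}; since the restriction map $\OO_B^+(V)\to\mathcal I(\H^n)$ is an isomorphism, its restriction to $W$ stays injective, and conjugation by $p$ (Corollary~\ref{cor:action}) then yields a faithful action on $\H^n_p$. For discreteness, Vinberg's theorem also gives that $W$ is discrete in $\OO_B(V)$; the point is that the restriction isomorphism $\OO_B^+(V)\to\mathcal I(\H^n)$ and the conjugation isomorphism $\mathcal I(\H^n)\to\mathcal I(\H^n_p)$ are homeomorphisms for the natural topologies, so discreteness is preserved at each stage. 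This transport of the topological property is the step I expect to require the most care: one must make precise that the subspace topology on $\OO_B^+(V)\subseteq\mathrm{GL}(V)$ corresponds, under restriction, to the topology of uniform convergence on compacta on $\mathcal I(\H^n)$, and likewise that conjugation by the fixed homeomorphism $p$ of Proposition~\ref{prop:proj} is bicontinuous. Everything else in the proof is a formal consequence of isomorphisms already set up in \S\ref{se:Hyperbo}.

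Finally, for the ``moreover'' part I would simply unwind the generators. Since $W=\langle s_\a\mid \a\in\Delta\rangle$ and each $\a$ is space-like, \S\ref{sss:Ref} identifies the restriction of $s_\a$ to $\H^n$ with the hyperbolic reflection across the hyperbolic hyperplane $\a^\perp\cap\H^n$; hence the action of $W$ on $\H^n$ is generated by these reflections. Applying conjugation by $p$ together with the last assertion of Corollary~\ref{cor:action}, each $s_\a\cdot v=p^{-1}\circ s_\a\circ p(v)$ is the hyperbolic reflection of $\H^n_p$ across the time-like hyperplane $\a^\perp$, that is, across $\a^\perp\cap\H^n_p$, so the projective action of $W$ is generated by the corresponding reflections, as claimed.
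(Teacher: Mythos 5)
Your proof is correct and follows essentially the same route as the paper: the paper's proof likewise reduces everything to checking $W\subseteq\OO_B^+(V)$ via the observation that simple roots are space-like, hence each $s_\alpha$ preserves the sheets, and lets faithfulness and discreteness (from Vinberg's theorem, already invoked in \S\ref{subsection:basedrootsystems}) transfer along the isomorphisms of \S\ref{sss:Isom} and Corollary~\ref{cor:action}. You are somewhat more explicit than the paper about why discreteness survives the restriction and conjugation isomorphisms, which the paper leaves implicit; this is a welcome precision rather than a divergence.
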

\begin{proof}
Since $\Delta$ is constituted of space-like vectors, the hyperplanes $\alpha^\perp$ are time-like. From~\S\ref{sss:Ref} we know therefore that $s_\alpha\in \OO^+_B(V)$ for all  $\alpha\in\Delta$. Since $W$ is generated by $S=\{s_\alpha\,|\, \alpha\in\Delta\}$, we have necessarily that $W\subseteq \OO^+_B(V)$.
\end{proof}

From now on, we denote  $\mathrm{H}_\alpha= \alpha^\perp\cap\mathbb{H}^n$ for any space-like vector.

\begin{remark} \label{prop:fundamentallink} The relative position between two hyperbolic hyperplanes has a nice characterization using their associated roots, which can be viewed as their ``normal vectors''. Let $\alpha$ and $\beta$ be two space-like linearly independent vectors such that $B(\alpha,\alpha)=B(\beta,\beta)=1$. So we know that $\alpha^\perp$ and $\beta^\perp$ are time-like hyperplanes. With the notations $\mathrm{H}_\alpha=
\alpha^\perp\cap\mathbb{H}^n$ and $\mathrm{H}_{\beta}=\beta^\perp\cap \mathbb{H}^n$ we have:
\begin{itemize}
\item[(i)] $\mathrm{H}_\alpha$ and $\mathrm{H}_{\beta}$ {\em intersect} if and only if $B(\alpha,\beta)\in ]-1,1[$, and in this case their dihedral angle is $\arccos |B(\alpha,\beta)|$;
\item[(ii)] $\mathrm{H}_\alpha$ and $\mathrm{H}_{\beta}$ are {\em parallel} if and only if $B(\alpha,\beta)=\pm 1$;
\item[(iii)] $\mathrm{H}_\alpha$ and $\mathrm{H}_{\beta}$ are {\em ultra-parallel} if and only if $|B(\alpha,\beta)|>1$ and in such a case  their distance in $\mathbb{H}^n$ is $\mathrm{arccosh}\,(|B(\alpha,\beta)|)$.  
\end{itemize}
Statement (i) follows from Theorem 3.2.6 (see also the discussion that follows in \S 3.2 and \S 6.4) of \cite{Ra06}; statement  (ii) follows from Theorem 3.2.9 of \cite{Ra06}, and statement (iii) from Theorems 3.2.7 and 3.2.8 of \cite{Ra06}.)
\end{remark}

\subsection{Limits of roots}\label{ss:limitroots}

 The constructions and statements of this subsection and the following one aim to be applied to the case of a based root system $(\Phi, \Delta)$ in a Lorentzian $(n+1)$-space  $(V,B)$ corresponding to a discrete reflection group of hyperbolic isometries.  Such based root systems are known as {\em weakly hyperbolic based root systems}; this connection will be made explicit in \S\ref{see:Vinberg}.
 
 \begin{defi}
A based root system $(\Phi,\Delta)$ in  a quadratic space $(V,B)$ is  {\em weakly hyperbolic}  if  $\Span(\Delta)$, together with the restriction of $B$ to $\Span(\Delta)$, is a Lorentzian space.
\end{defi}
 
Let $(\Phi, \Delta)$ be a based root system in a Lorentzian $(n+1)$-space  $(V,B)$. To simplify the arguments and definitions we always assume from now on that $\Span(\Delta)=V$ and that~$W$ is an irreducible Coxeter group. Therefore $(\Phi, \Delta)$ is infinite and is a based root system. For more details, see~\cite{Dy12,HoLaRi12,DyHoRi13}.

 For an arbitrary norm on the vector space $V$, the norm of any injective sequence of roots goes to infinity, so $\Phi$ does not have accumulation points (see~\cite[Theorem 2.7]{HoLaRi12}). We rather look at accumulation points of the directions of the roots. In order to do this, we will cut those directions by {\em a hyperplane transverse to $\Phi^+$}, i.e. an affine hyperplane that intersects all the directions of the roots; see \cite[\S2.1]{DyHoRi13} for the existence of this affine hyperplane. So we obtain points that are representatives of those directions, see for instance Figure~\ref{fig:dihed}.
  
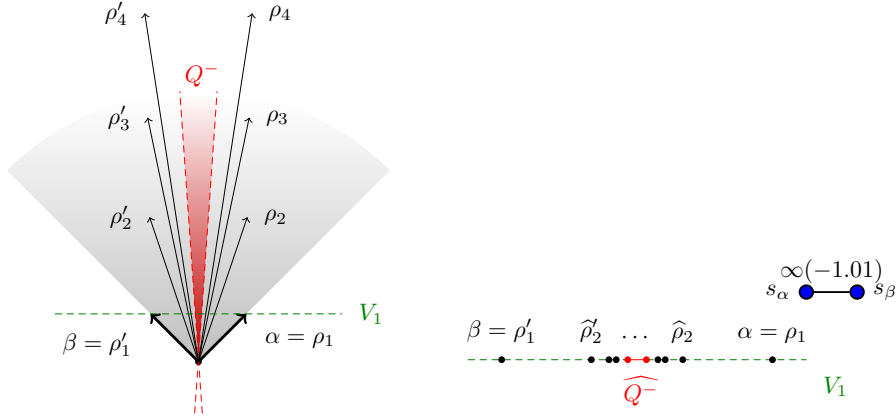
\begin{figure}[!ht]
\centering
\captionsetup{width=0.9\textwidth}   
\scalebox{0.9}{\begin{tabular}{c@{\hspace{1cm}}c}

\begin{tikzpicture}
	[scale=1,
	 pointille/.style={densely dashed},
	 axe/.style={color=black, very thick},
	 rotate=45]

\coordinate (O) at (0,0);
\fill (O) circle (0.05);

\def\arctandemi{40.97}

\shade [shading=axis,top color=white!75!black,bottom color=white,shading angle=180] (O) -- (4,0) arc (0:\arctandemi:4) -- (O);
\shade [shading=axis,top color=red!75!black,bottom color=white,shading angle=180] (O) -- (\arctandemi:4) arc (\arctandemi:90-\arctandemi:4) -- (O);
\shade [shading=axis,top color=white!75!black,bottom color=white,shading angle=180] (O) -- (90-\arctandemi:4) arc (90-\arctandemi:90:4) -- (O);

\draw[pointille,red] (-0.58,-0.5) -- (3.02,2.63) {};
\draw[pointille,red] (-0.5,-0.58) -- (2.63,3.02)  {};

\node[red] at (3,3) {$\ Q^-$};

\draw[axe,->] (O) -- (1,0) node[label=below right :{$\alpha=\rho_1$}] {};
\draw[axe,->] (O) -- (0,1) node[label=below left :{$\beta=\rho'_1$}] {};

\draw[->] (O) -- (2.02,1) node[label=right:{$\rho_2$}] {};
\draw[->] (O) -- (1,2.02) node[label=left:{$\rho'_2$}] {};

\draw[->] (O) -- (3.0804,2.02) node[label=right:{$\rho_3$}] {};
\draw[->] (O) -- (2.02,3.0804) node[label=left:{$\rho'_3$}] {};

\draw[->] (O) -- (4.202408,3.0804) node[label=right:{$\rho_4$}] {};
\draw[->] (O) -- (3.0804,4.202408) node[label=left:{$\rho'_4$}] {};

\draw[pointille,color=green!50!black] (-1,2) -- (2,-1) node[label=right:{$V_1$}] {};

\end{tikzpicture}

&

\begin{tikzpicture}
	[scale=1,
	 pointille/.style={densely dashed},
	 sommet/.style={inner sep=2pt,circle,draw=black,fill=blue,thick,anchor=base},
	 ]

\draw[pointille,color=green!50!black] (-2.5,0) -- (-0.1380,0);
\draw[pointille,color=green!50!black] (0.1380,0) -- (2.5,0) node[label=below right:{$V_1$}] {};

\coordinate (O) at (0,0);
\fill[red] (0.1380,0) circle (0.05);
\fill[red] (-0.1380,0) circle (0.05);

\draw[color=red] (-0.1380,0) -- (0.1380,0) {};

\fill (2,0) circle (0.05) node[label=above :{$\alpha=\rho_1$}] {}; 
\fill (-2,0) circle (0.05) node[label=above :{$\beta=\rho'_1$}] {};

\fill (0.675,0) circle (0.05) node[label=above:{$\widehat\rho_2$}] {}; 
\fill (-0.675,0) circle (0.05) node[label=above :{$\widehat\rho'_2$}] {};

\fill (0.4158,0) circle (0.05) ;
\fill (-0.4158,0) circle (0.05) ;

\fill (0.3081,0) circle (0.05) ;
\fill (-0.3081,0) circle (0.05) ;

\node[label=above:{$\cdots$}] at (O) {};
\draw[red] node[label=below:{\ $\widehat{Q^-}$}] at (O) {};

\coordinate (ancre) at (2.5,1);

\node[sommet,label=left:$s_\alpha$] (alpha) at (ancre) {};
\node[sommet,label=right:$s_\beta$] (beta) at ($(ancre)+(0.75,0)$) {} edge[thick] node[auto,swap] {$\infty(-1.01)$} (alpha);

\end{tikzpicture}
\end{tabular}}
\caption{The isotropic cone $Q$ and the first positive roots and normalized roots of an infinite based root system of rank $2$ with $B(\alpha,\beta)=-1.01$ in a Lorentzian $2$-space.}
\label{fig:dihed}
\end{figure}

In the Lorentzian case there is an explicit natural choice for a transverse cutting hyperplane: the affine hyperplane $\mathcal H$ (from \S\ref{ss:projmodel}), such that $Q\cap\mathcal H$ is a sphere. 

The discussion in~\S\ref{se:Hyperbo} depends heavily on the basis $\mathcal B$ of~\S\ref{ss:bases}. By~\cite[Proposition 4.13]{DyHoRi13}, we can fix $\mathcal B=(e_1,\dots,e_{n+1})$ as in~\S\ref{se:Hyperbo}  such that  $\mathcal H$ is transverse to~$\Phi^+$. Moreover, we may assume that $\mathcal B$ and $\mathcal H$ have the following properties:

\begin{enumerate}

\item $B(e_{n+1},\alpha)<0$ for all $\alpha\in \Delta$. Moreover $e_{n+1}$ lies in $\cone(\Delta)$ (by~\cite[Lemma 2.4, Proposition 4.13]{DyHoRi13}). This means that the fundamental tile $K$ of the  imaginary convex set (see Definition~\ref{def:imc}) contains the center of the ball model for $\mathbb H^n$.

\item  Denote by $H$ the linear hyperplane directing $\mathcal H$, and for $v\in V\setminus
H$, denote by $\widehat v$ the intersection point of the line $\mathbb Rv$
with $\mathcal H$ (we also use the analog notation~$\widehat P$ for a subset $P$ of
$V\setminus H$), see \cite[\S2.1 and \S5.2]{HoLaRi12} for more details. With these notations we have:

\begin{enumerate}
 
\item  $\mathbb{H}_p^n= \widehat{Q^-}=Q^-\cap \mathcal H$ and its boundary is $\partial \mathbb{H}_p^n= Q\cap \mathcal H=\widehat Q$,  by Proposition~\ref{prop:proj};

\item  for any $x\in \mathbb{H}^n_p$ and $w\in W$,  $w\cdot x = \widehat{w(x)}$, by Corollary~\ref{cor:action}.

\end{enumerate}
\end{enumerate}

\smallskip

\noindent In \cite{HoLaRi12,DyHoRi13}, the following objects were introduced and studied in the case of a general based root system:
\begin{itemize}
\item The set of {\em normalized roots $\widehat\Phi:=\{\widehat\beta\,|\,
    \beta\in \Phi\}$}, which is contained in the convex hull of the
  {\em normalized simple roots} $\widehat\alpha$ in $\widehat\Delta$, seen as   
  points in $\mathcal H$, the affine hyperplane $\big\{x_{n+1}=1\big\}$. The normalized roots are representatives of the
  directions of the roots, or in other words, of the roots seen in the
  projective space $\mathbb{P}V$. In~Figure~\ref{fig:intro},
  normalized roots are in blue, while the edges of the polytope
  $\conv(\widehat\Delta)$ are in green.

\item The set $E(\Phi)$ of accumulation points of $\widehat\Phi$, to which
  tends the blue shape in~Figure~\ref{fig:intro}. For short, we call
  the elements of $E(\Phi)$, which are limit points of normalized roots, the
  \emph{limit roots} of $\Phi$.
  
   \item The Coxeter group $W$ acts on $\conv(E(\Phi))\sqcup\widehat \Phi$ by the action $w\cdot x = \widehat{w(x)}$ above, see~\cite[\S2.3]{DyHoRi13}.

\end{itemize}

In the case the root system is weakly hyperbolic, the geometry of $E$ is well understood, and the action of $W$ on $E(\Phi)$ is faithful, see \cite[Theorem 6.1]{DyHoRi13}.


\subsection{Convex hull of the limit roots and proof of Theorem~\ref{thm:main}}\label{ss:prooftheorem}

The set $E(\Phi)$ also enjoys some fractal properties as shown in \cite[\S4]{DyHoRi13}, see also \cite{rank3}. The main ingredient to explain these properties is its relation with Dyer's imaginary cone and the {\em imaginary convex set}, see \cite[\S2]{DyHoRi13}.  

\begin{defi}\label{def:imc}  The {\em imaginary convex set} $Z(\Phi)$ is the $W$-orbit of the polytope
\[
K :=\{v\in \conv(\widehat\Delta)\,|\, B(v,\alpha)\leq 0, \ \forall \alpha\in\Delta\}.
\]
Note that $K$ is the intersection of $\conv(\widehat\Delta)$ with the halfspaces $H_\alpha^-$ for $\alpha\in\Delta$, where $H_\alpha^-$ has for boundary the reflecting hyperplane $H_\alpha$, and does not contain $\widehat\alpha$.
\end{defi}

\begin{remark} The imaginary convex set $Z(\Phi)$ is the projective version of the {\em imaginary cone} that has been first introduced by Kac (see \cite[Ch.~5]{Kac90}) in the context of Weyl groups of Kac-Moody Lie algebras; this notion has been generalized afterwards to arbitrary Coxeter groups, first by H\'ee \cite{He93}, then by Dyer~\cite{Dy12} (see also Edgar's thesis \cite{Ed09} or Fu's article~\cite{Fu11-2}). The definition we use here is illustrated in Figure~\ref{fig:3}. This definition  is the specialization to Lorentzian spaces of a definition that applies to any {\em geometric representation} (over a quadratic space) of a  finitely generated Coxeter group; see \cite{DyHoRi13} for more details. The imaginary cone in Dyer's general definition is indeed a cone (\cite[Prop.~3.2.(b)]{Dy12}, and thus $Z(\Phi)$ is indeed a convex set.
\end{remark}

 The imaginary convex set, and its closure, are intimately linked with the set of limit roots, see~\cite[\S2]{DyHoRi13}:
\begin{itemize}
\item
$\conv(E(\Phi))=\overline{Z(\Phi)} \subseteq \{v\in \mathcal H\,|\, B(v,v)\leq 0\} =\overline{Q^-} \cap \mathcal H$.

\item In particular $K$ and $Z(\Phi)$ intersect $\widehat{Q^-}$ (which is exactly $\mathbb{H}_p^n$ in the Lorentzian case). 

\item $\conv(E(\Phi))=\overline{Z(\Phi)} $ is the unique non-empty closed $W$-invariant convex set contained
in $\conv(\Delta)$, see  \cite[Theorem 7.6]{Dy12}.

\end{itemize}

When $(\Phi,\Delta)$ is weakly hyperbolic, we have more information:
\begin{itemize}
\item by~\cite[Lemma 2.4]{DyHoRi13}, the polytope $K$ has a non-empty interior;
  \item the action of $W$ on the non-empty set $Z\cap \mathbb{H}_p^n$ is the same as the action from Corollary~\ref{cor:action};
\item by~\cite[Theorem~4.10]{DyHoRi13}, we have  $E(\Phi)=\overline Z\cap Q$, 
  and by~\cite[Corollary 6.15(c)]{DyHoRi13} the set of limit roots is also the accumulation set of any orbit in $Z$:
\begin{equation}
\label{eq:fund} 
E(\Phi)=\textrm{Acc}(W\cdot v),\quad \textrm{for any }\ v\in Z.
\end{equation}
\end{itemize}

\begin{figure}[!h]
\begin{minipage}[b]{\linewidth}
\centering
\scalebox{0.8}{\begin{tikzpicture}
	[scale=2,
	 q/.style={red,thick},
	 racine/.style={blue},
	 racinesimple/.style={black},
	 racinedih/.style={blue},
	 sommet/.style={inner sep=2pt,circle,draw=black,fill=blue,thick,anchor=base},
	 rotate=0]

\def\grosseur{0.0125}
\def\grosseursimple{0.025}

\def\grosseurdih{0.0075}



\draw[q] (2, 1.15470053837925) circle (1.30267789455786) ;

\node[label=left :{$\alpha$}] (a) at (0.000000000000000,0.000000000000000) {};
\fill[racinesimple] (0.000000000000000,0.000000000000000) circle (\grosseursimple);\node[label=right :{$\beta$}] (b) at (4.00000000000000,0.000000000000000) {};
\fill[racinesimple] (4.00000000000000,0.000000000000000) circle (\grosseursimple);\node[label=above :{$\gamma$}] (g) at (2.00000000000000,3.46410161513775) {};
\fill[racinesimple] (2.00000000000000,3.46410161513775) circle (\grosseursimple);
\draw[green!75!black] (a) -- (b) -- (g) -- (a);
\fill[racine] (1.17647058823529,0.0100000000000000) circle (\grosseursimple);
\fill[racine] (1.41176470588235,2.44524819892077) circle (\grosseursimple);
\fill[racine] (2.82352941176471,0.0100000000000000) circle (\grosseursimple);
\fill[racine] (3.41176470588235,1.01885341621699) circle (\grosseursimple);
\fill[racine] (0.588235294117647,1.01885341621699) circle (\grosseursimple);
\fill[racine] (2.58823529411765,2.44524819892077) circle (\grosseursimple);

\coordinate (ancre) at (-0.5,2.6);
\node[sommet,label=below left:$s_\alpha$] (alpha) at (ancre) {};
\node[sommet,label=below right :$s_\beta$] (beta) at ($(ancre)+(0.5,0)$) {} edge[thick] node[auto] {-1.2} (alpha);
\node[sommet,label=above:$s_\gamma$] (gamma) at ($(ancre)+(0.25,0.43)$) {} edge[thick] node[auto,swap] {-1.2} (alpha) edge[thick] node[auto] {-1.2} (beta);

\filldraw[draw= black ,fill= yellow ,opacity= 0.300000000000000 ]
(2.909090908, 1.8895099722) --
(1.0909090908, 1.8895099722) --
(0.909090909, 1.5745916434) --
(1.8181818182, 0.0) --
(2.181818182, 0.0) --
(3.09090909, 1.5745916426) --
cycle ;


\filldraw[draw= black ,fill= yellow!80 ,opacity= 0.300000000000000 ]
(0.855614973194145, 0.555738227281955)  -- 
(0.779220779101243, 1.34964998007078)  -- 
(0.909090909001093, 1.57459164340189)  -- 
(1.81818181816364, 0.000000000000000)  -- 
(1.55844155834879, 0.000000000000000)  -- 
(0.909090909200340, 0.463115189191957)  -- 
cycle ;

\filldraw[draw= black ,fill= yellow!80 ,opacity= 0.300000000000000 ]
(3.22077922077631, 1.34964997879825)  -- 
(3.14438502678509, 0.555738227066197)  -- 
(3.09090909093551, 0.463115189208216)  -- 
(2.44155844154916, 0.000000000000000)  -- 
(2.18181818163636, 0.000000000000000)  -- 
(3.09090909053703, 1.57459164166984)  -- 
cycle ;

\filldraw[draw= black ,fill= yellow!80 ,opacity= 0.300000000000000 ]
(2.90909090835412, 1.88950997158664)  -- 
(1.09090909044588, 1.88950997158664)  -- 
(1.22077922060431, 2.11445163513458)  -- 
(1.94652406417647, 2.44524819892077)  -- 
(2.05347593588235, 2.44524819892077)  -- 
(2.77922077811584, 2.11445163554275)  -- 
cycle ;


\filldraw[draw= black ,fill= yellow!60 ,opacity= 0.300000000000000 ]
(2.88170407005843, 0.197647486673624)  -- 
(3.05271199391595, 0.396955876465156)  -- 
(3.09090909093583, 0.463115189208773)  -- 
(2.44155844156772, 0.000000000000000)  -- 
(2.55161787368442, 0.000000000000000)  -- 
(2.85271933305869, 0.171898651017698)  -- 
cycle ;

\filldraw[draw= black ,fill= yellow!60 ,opacity= 0.300000000000000 ]
(1.57522826510843, 2.38457808676480)  -- 
(1.27580893667477, 2.20976589928114)  -- 
(1.22077922060375, 2.11445163513362)  -- 
(1.94652406416578, 2.44524819892077)  -- 
(1.87012987010259, 2.44524819892077)  -- 
(1.61201977942229, 2.39680518749896)  -- 
cycle ;

\filldraw[draw= black ,fill= yellow!60 ,opacity= 0.300000000000000 ]
(0.947288006310731, 0.396955876200963)  -- 
(1.11829592999777, 0.197647486608127)  -- 
(1.14728066692370, 0.171898651013539)  -- 
(1.44838212635149, 0.000000000000000)  -- 
(1.55844155853432, 0.000000000000000)  -- 
(0.909090909358289, 0.463115188918381)  -- 
cycle ;

\filldraw[draw= black ,fill= yellow!60 ,opacity= 0.300000000000000 ]
(2.72419106255821, 2.20976589972896)  -- 
(2.42477173484066, 2.38457808679453)  -- 
(2.38798022062887, 2.39680518749691)  -- 
(2.12987012986740, 2.44524819892077)  -- 
(2.05347593577540, 2.44524819892077)  -- 
(2.77922077784185, 2.11445163601732)  -- 
cycle ;

\filldraw[draw= black ,fill= yellow!60 ,opacity= 0.300000000000000 ]
(0.855614973298298, 0.555738227101555)  -- 
(0.779220779281918, 1.34964998038372)  -- 
(0.724191063174665, 1.25433571593848)  -- 
(0.722508932116326, 0.907624877326688)  -- 
(0.730315709398947, 0.869648941020573)  -- 
(0.817417876158433, 0.621897540049636)  -- 
cycle ;

\filldraw[draw= black ,fill= yellow!60 ,opacity= 0.300000000000000 ]
(3.22077922059563, 1.34964997911119)  -- 
(3.14438502668094, 0.555738226885798)  -- 
(3.18258212374129, 0.621897539719493)  -- 
(3.26968429060517, 0.869648941040586)  -- 
(3.27749106788816, 0.907624877348487)  -- 
(3.27580893678228, 1.25433571494016)  -- 
cycle ;

\draw  (2, 1.15) node{$K$};
\draw  (1.13827349116821, 0.657182506657764) node{{\small $s_{\alpha}\!\cdot\!K$}};

\end{tikzpicture}}
\end{minipage}%
\caption{Picture in rank $3$ of the first $9$ normalized roots  for the weakly hyperbolic root system with diagram in the upper left of the picture. In red is $\widehat Q=\partial \mathbb{H}_p^2$ and in green is the boundary of  $\conv(\widehat\Delta)$. Some reflection hyperplanes for $W$, $K$ and some of its images under reflections are also represented in shaded yellow.}
\label{fig:3}
\end{figure}
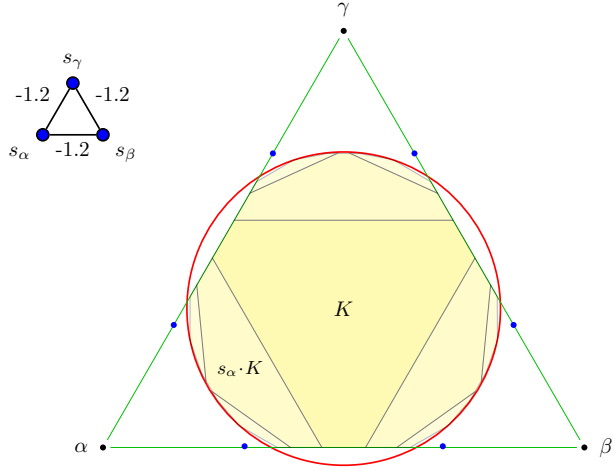

\begin{remark} The property that $E(\Phi)=\overline Z\cap Q$ is crucial to prove Equation~\eqref{eq:fund}. Whether these two properties are true or not for other irreducible (i.e. the Coxeter graph is connected) root systems (when $(V,B)$ is not  a Lorentzian space) is an open question, see \cite[Question~4.9]{DyHoRi13} and the proof of \cite[Corollary 6.15(c)]{DyHoRi13}.
\end{remark}

We prove now Theorem~\ref{thm:main} from the introduction, whose precise statement is given below.

\begin{thm}\label{thm:main2} Let $(\Phi,\Delta)$ be a weakly hyperbolic root system in a Lorentzian space $(V,B)$, and $W\subseteq \mathcal I(\mathbb{H}_p^n)$ its associated Coxeter group. Then the limit set $\Lambda(W)$ of~$W$ is equal to the set $E(\Phi)$ of limit roots of $\Phi$. 
\end{thm}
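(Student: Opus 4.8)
The plan is to prove the two inclusions $\Lambda(W)\subseteq E(\Phi)$ and $E(\Phi)\subseteq\Lambda(W)$, exploiting the fact that both sets live on the same boundary $\partial\H^n_p=Q\cap\mathcal H$ and are accumulation sets of $W$-orbits for \emph{one and the same} action, namely the projective action $w\cdot x=\h{w(x)}$ of Corollary~\ref{cor:action} (property~(2b) of~\S\ref{ss:limitroots}). The only difference between the two sides is the choice of the points whose orbit one accumulates: $\Lambda(W)$ comes from the orbit of a single time-like point $x_0\in\H^n_p=Q^-\cap\mathcal H$, whereas $E(\Phi)$ comes from the orbit $W\cdot\h\Delta$ of the space-like normalized simple roots. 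Thus everything reduces to comparing these two accumulation sets. The geometric bridge between them is the observation that each normalized root $\h{w(\a)}$ is the \emph{pole}, with respect to the quadric $Q$, of the reflecting hyperbolic hyperplane $w(\a)^\perp\cap\H^n_p$: indeed the $B$-polar of the linear hyperplane $H_{w(\a)}=w(\a)^\perp$ is the line $\mathbb R\,w(\a)$, which meets $\mathcal H$ precisely in $\h{w(\a)}$.

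\medskip

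For $\Lambda(W)\subseteq E(\Phi)$, I would fix $x\in\Lambda(W)$ and write $x=\lim_k\h{w_k(x_0)}$ for pairwise distinct $w_k\in W$. Picking any $\a\in\Delta$ and setting $y_0=p(x_0)\in\H^n$, the $W$-invariance of $B$ together with $q(\a)=1$ and the standard point-to-hyperplane formula $\sinh d\bigl(y,\a^\perp\cap\H^n\bigr)=|B(y,\a)|$ (see~\cite{Ra06}) shows that the hyperbolic distance $d\bigl(w_k(y_0),\,w_k(\a)^\perp\cap\H^n\bigr)=|B(y_0,\a)|$ is \emph{constant} in $k$. As $w_k(y_0)$ leaves every compact set with direction tending to the boundary point $x$, the reflecting hyperplanes $w_k(\a)^\perp\cap\H^n_p$, staying at bounded distance from these orbit points, must collapse onto $x$; hence their poles $\h{w_k(\a)}$ converge to $x$, and since each $\h{w_k(\a)}$ is a normalized root we get $x\in E(\Phi)$.

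\medskip

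For the reverse inclusion $E(\Phi)\subseteq\Lambda(W)$, take $x=\lim_k\h{w_k(\a_k)}\in E(\Phi)$ with $\a_k\in\Delta$. The same constant-distance identity shows that the orbit points $w_k\cdot x_0=\h{w_k(x_0)}$ stay at bounded hyperbolic distance from the hyperplanes whose poles $\h{w_k(\a_k)}$ tend to $x$; as those hyperplanes collapse onto $x$, the orbit points are dragged to $x$ as well, so $x\in\Lambda(W)$. A cleaner route that stays entirely inside the Coxeter-theoretic framework, and the one I would actually follow, is to invoke the description in~\cite{DyHoRi13} of $E(\Phi)$ as the accumulation set of the $W$-orbit of any point of the (projectivized) imaginary cone $\imc$; in the Lorentzian case this cone contains $\H^n_p=Q^-\cap\mathcal H$, so applying that statement to our base point $x_0\in\H^n_p$ yields at once that the accumulation set of $\{w\cdot x_0\}$ equals $E(\Phi)$, i.e. $\Lambda(W)=E(\Phi)$.

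\medskip

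The main obstacle is making rigorous the two assertions of the form \emph{``the poles of a family of hyperbolic hyperplanes that stay at bounded distance from orbit points tending to $x\in\partial\H^n_p$ converge to $x$''}: this is exactly where one must pass back and forth between the $B$-geometry of the normal vectors $w(\a)$, which governs the normalized roots, and the intrinsic metric geometry of $\H^n$, which governs the orbit, and controlling the pole of a collapsing hyperplane uniformly is delicate. For this reason the safe strategy is to quote the orbit-accumulation result of~\cite{DyHoRi13} together with the identification of the common action $w\cdot x=\h{w(x)}$, rather than to re-derive the convergence of poles purely within hyperbolic geometry.
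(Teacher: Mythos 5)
Your ``cleaner route'' --- the one you say you would actually follow --- is essentially the paper's proof: combine the base-point independence of $\Lambda(W)$ (\S\ref{ss:LimitSet}) with the result of \cite[Corollary 6.15(c)]{DyHoRi13} that $E(\Phi)=\textrm{Acc}(W\cdot v)$ for any $v$ in the imaginary convex set $Z$. But the step by which you bring the two accumulation sets onto a common base point contains a false claim: you assert that in the Lorentzian case $Z$ contains $\H_p^n=Q^-\cap\mathcal H$. This is not true in general. Since $\overline Z=\conv(E(\Phi))$ is the convex hull of a closed subset of the sphere $\h Q$, it contains the closed ball $\overline{\H_p^n}$ only when $E(\Phi)=\h Q$, i.e.\ essentially only in the hyperbolic case; for a weakly hyperbolic, non-hyperbolic root system such as the one of Figure~\ref{fig:3} one has $K\cap\H_p^2\subsetneq(-C)\cap\H_p^2$ and $E(\Phi)\subsetneq\h Q$, so $Z$ misses a nonempty open subset of $\H_p^n$. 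What is actually needed, and what the paper proves, is the weaker statement that $Z\cap\H_p^n\neq\varnothing$; this follows from \cite[Lemma 2.4]{DyHoRi13} ($K$ has nonempty interior because the root system is weakly hyperbolic --- this is precisely where that hypothesis enters). One then takes the base point $v$ inside $Z\cap\H_p^n$ and invokes the base-point independence you already quote to get $\Lambda(W)=\textrm{Acc}(W\cdot v)=E(\Phi)$. So the gap is small and repairable with tools you have on the table, but as written the key containment is wrong and the role of weak hyperbolicity is obscured.

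Your first, purely hyperbolic-geometric sketch is a genuinely different idea, and you are right to flag it as incomplete. The delicate point is exactly the one you name: a hyperbolic hyperplane at bounded hyperbolic distance from a point tending to $x\in\partial\H_p^n$ need not collapse onto $x$ (it could pass Euclidean-close to $x$ while retaining large Euclidean diameter, in which case its pole stays far from $x$). To rule this out one needs the local finiteness of the arrangement of reflection hyperplanes in $\H^n$, together with an argument that the roots $w_k(\a)$ form an infinite family for a suitable choice of $\a\in\Delta$ (for a fixed $\a$ they could a priori take only finitely many values). Note that the paper explicitly states that the authors do not know a direct proof using only hyperbolic geometry, so completing that sketch would go beyond reproducing the paper's argument; as submitted, it is a heuristic, not a proof.
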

\begin{proof}  To study the limit set of~$W$ we may choose any point in $\mathbb{H}_p^n$ (see \S\ref{ss:LimitSet}), so it satisfies:
\[
\Lambda(W)=\textrm{Acc}(W\cdot v),
\]
for any $v\in Z\cap \mathbb{H}_p^n$. Since $Z$ is non-empty, Theorem~\ref{thm:main2}, and therefore Theorem~\ref{thm:main}, follows from Equation~\ref{eq:fund}.
\end{proof}

\subsection{Discrete groups generated by hyperbolic reflections and Coxeter groups}  The relation between ``hyperbolic Coxeter groups'' and discrete groups generated by hyperbolic reflections is not always transparent in the literature. The terminology ``hyperbolic Coxeter group'' is used by many
authors, but not in a consistent way.  For instance, in the article of Krammer~\cite{Kr09}, ``hyperbolic Coxeter group'' means a Coxeter group attached to a weakly hyperbolic root system, whereas in Humphreys' book~\cite{Hu90} this means a strict subclass of Coxeter groups attached to a weakly hyperbolic root system. See also the difference in the use of these expressions between~\cite{Da08,Ra06,AbBr08} or \cite{Do08}.  We  end this section by clarifying the relation between those terms.

\subsubsection{Discrete reflection groups of hyperbolic isometries}\label{see:Vinberg}  A {\em discrete reflection group on $\mathbb{H}^n$} is  a discrete subgroup  of hyperbolic isometries $\Gamma  \subseteq \mathcal I(\mathbb{H}^n)$ generated by (finitely many) hyperbolic reflections, see \cite[Chapter 5, \S1.2]{Vi93}. We explained before how  a Coxeter group with a root system in a Lorentzian space has a representation as a discrete reflection group, see Proposition~\ref{prop:repHypo}. Conversely, we have the following theorem.

\begin{thm}[Vinberg~\cite{Vi71,Vi93}]\label{thmVinberg} Discrete reflection groups on $\mathbb H^n$ are Coxeter groups that are  associated to  based root systems in Lorentzian spaces. 
\end{thm}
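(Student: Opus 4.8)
The plan is to recover a fundamental chamber for $\Gamma$ and to read off a simple system from its walls. Since $\Gamma\subseteq\mathcal I(\H^n)$ is discrete, the collection of all mirrors $\{\mathrm H_s\mid s \text{ a reflection of }\Gamma\}$ is a locally finite family of hyperbolic hyperplanes in $\H^n$. Hence the complement $\H^n\setminus\bigcup_s \mathrm H_s$ is open, and its connected components, the \emph{chambers}, are permuted simply transitively by $\Gamma$; this is the starting point of Vinberg's theory \cite{Vi71,Vi93}. Fix one chamber $C$: it is a convex hyperbolic polyhedron, and since $\Gamma$ is generated by finitely many reflections, $C$ has finitely many walls $\mathrm H_1,\dots,\mathrm H_m$, each supported by a time-like hyperplane of $V$ (in the sense of \S\ref{sss:Ref}). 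Let $s_i$ be the reflection across $\mathrm H_i$ and set $S=\{s_1,\dots,s_m\}$.

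The core step, and the main obstacle, is to prove that $(\Gamma,S)$ is a Coxeter system with the expected exponents; this is exactly where discreteness is used, and it is precisely Vinberg's theorem (equivalently, a consequence of Poincar\'e's fundamental polyhedron theorem). One reads the exponent $m_{ij}$ off the relative position of $\mathrm H_i$ and $\mathrm H_j$: if they meet inside $\H^n$ at dihedral angle $\pi/m_{ij}$ then $s_is_j$ is a rotation of order $m_{ij}$, whereas if their closures are disjoint or meet only on $\partial\H^n$ (the parallel and ultra-parallel cases) then $s_is_j$ has infinite order and $m_{ij}=\infty$. The assertion to establish is that these relations present $\Gamma$, so that $(\Gamma,S)$ is the Coxeter system with matrix $(m_{ij})$.

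It then remains to exhibit the based root system. For each wall $\mathrm H_i$, choose the unit space-like normal $\alpha_i$ to its supporting hyperplane, pointing away from $C$; this is possible because a time-like hyperplane has a space-like normal (\S\ref{sss:Ref}), and we normalize it by $B(\alpha_i,\alpha_i)=1$ and orient it so that $B(\alpha_i,x)\le 0$ for $x\in C$. Put $\Delta=\{\alpha_1,\dots,\alpha_m\}$; then $s_{\alpha_i}=s_i$ by \eqref{eq:Reflection}, and axiom (iii) holds by normalization. Axiom (ii) is exactly the dictionary of Remark~\ref{prop:fundamentallink}: intersecting walls give $B(\alpha_i,\alpha_j)=-\cos(\pi/m_{ij})$ once the outward orientation fixes the sign, parallel walls give $B(\alpha_i,\alpha_j)=-1$, and ultra-parallel walls give $B(\alpha_i,\alpha_j)\le -1$, so in every case $B(\alpha_i,\alpha_j)\in\,]-\infty,-1]\cup\{-\cos(\pi/k)\mid k\ge 2\}$. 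Finally, axiom (i) follows by pairing against an interior point $x_0\in C$: if $\sum_i\lambda_i\alpha_i=0$ with all $\lambda_i\ge 0$, then $0=B(\sum_i\lambda_i\alpha_i,x_0)=\sum_i\lambda_i B(\alpha_i,x_0)$ is a sum of non-positive terms, each $B(\alpha_i,x_0)<0$ since $x_0$ lies in the open chamber, whence every $\lambda_i=0$. Thus $(\Phi,\Delta)$ with $\Phi=\Gamma(\Delta)$ is a based root system in the Lorentzian space $(V,B)$, and by construction its associated Coxeter group is $(\Gamma,S)$, which completes the argument.
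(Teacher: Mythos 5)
Your proposal is correct and follows essentially the same route as the paper's own sketch: both pass to the locally finite mirror arrangement, take a chamber of it as fundamental polyhedron, invoke Vinberg's (equivalently Poincar\'e's) theorem for the presentation by the dihedral-angle relations, and recover the simple system from the outward unit normals to the walls. The only difference is that you verify the three axioms of a simple system explicitly (sign of $B(\alpha_i,\alpha_j)$ via the intersecting/parallel/ultra-parallel trichotomy, and positive independence by pairing against an interior point of the chamber), where the paper delegates this to Vinberg's Theorem~2.
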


This result due to Vinberg completes for spaces of constant curvature the classical result of Coxeter~\cite{Co34} that  shows that: (1) discrete reflection groups on the sphere  (equivalently on the Euclidean vector space)  are finite and are Coxeter groups; (2)  discrete reflection groups on  affine Euclidean space are Coxeter groups. Moreover, Coxeter classified those groups in the case of the sphere and of the affine Euclidean space, see \cite{Hu90}. Such a classification is still incomplete for Coxeter groups that arise as discrete reflection groups on $\mathbb{H}^n$. Only the subclass of {\em hyperbolic reflection groups} is classified, see below for more details.

\begin{remark}[Remark on the proof of Theorem \ref{thmVinberg}] To show that a discrete reflection group~$\Gamma$ on~$\mathbb{H}^n$ is a Coxeter group is a bit more complicated than in the case of the sphere and of the affine Euclidean space, but the main steps are the same. To our knowledge, the theorem above is never stated precisely in these terms, but is clearly apparent in \cite[Chapter 5, \S1.1 and \S1.2]{Vi93}.

First, consider the hyperplane arrangement associated to $\Gamma$, i.e., the set of hyperplanes of the reflections in $\Gamma$. This hyperplane arrangement is locally finite and therefore decomposes $\mathbb H^n$ into (hyperbolic) convex polyhedra (each of these is a {\em Dirichlet domain}, see \cite{Vi93} or \cite[\S6.6]{Ra06}). Pick one of these to be the {\em fundamental chamber}\footnote{Also called {\em fundamental convex polyhedron} in the literature, see \cite{Ra06,Vi93}} $P$, which is a convex polyhedron. Then this fundamental chamber is a {\em fundamental domain} for the action of $\Gamma$ on $\mathbb H^n$ and the angles between the facets that intersect\footnote{See Remark~\ref{prop:fundamentallink}.} are {\em submultiples of $\pi$}, see \cite[Chapter 5, \S1.1 and Proposition 1.4]{Vi93} or \cite[Theorem 2.1]{Do08}.  Then, by \cite[Theorem 2]{Vi71}, we know that the exterior unitary (for $B$) normal vectors in $V$ associated to the facets of $P$  form a simple system $\Delta$ and the union $\Phi$ of their orbits is a root system. Finally, by denoting $S$  the set of reflections through the facets of $P$, one deduces that $(\Gamma, S)$ is a Coxeter system associated to the based root system $(\Phi,\Delta)$ (see also \cite[Theorem 1.2.2]{Kr09}).
\end{remark}

\subsubsection{Fundamental polyhedron of discrete reflection groups of hyperbolic isometries} Let $W\subseteq \mathcal I(\mathbb{H}^n)$ be a discrete reflection group on $\mathbb{H}^n$, with associated root system $(\Phi,\Delta)$ in $(V,B)$. A fundamental convex polyhedron $P$, i.e., a fundamental chamber as in the remark above,  can be easily described with the help of the {\em Tits cone}. Since $B$ is non-degenerate, we associate $V$ and its dual. So  
\[
C :=\{v\in V\,|\, B(v,\alpha)\geq 0, \ \forall \alpha\in\Delta\}
\] 
is a fundamental domain for the action of $W$  on the {\em Tits cone} $\mathcal U$, which is the union of the $W$-orbit $W(C)$; we call $C$ the 
{\em fundamental chamber}. 

\begin{remark}
Observe that with Definition~\ref{def:imc}, $K=(-C)\cap\conv(\widehat\Delta)$, and the imaginary convex set $Z$  is contained in the negative of the Tits cone~$-\mathcal U$, see Figure~\ref{fig:3} or Figure~\ref{fig:4}.
\end{remark}

The following proposition is mostly a reinterpretation of classical results, see for instance~\cite{Kr09}.

 \begin{prop}\label{prop:Tits} Let $(\Phi,\Delta)$ be a root system in the Lorentzian $(n+1)$-space $(V,B)$ with associated Coxeter system $(W,S)$.  Then 
 \begin{enumerate}[(i)]
 \item $(-\mathcal U)\cap \mathbb H^n=\mathbb H^n$ and $(-\mathcal U)\cap \mathbb{H}_p^n=\mathbb{H}_p^n$. 
\item  $P:=(-C)\cap \mathbb H^n$ is a fundamental domain for the action of the discrete group of isometries $W$ on $\mathbb H^n$.
\item  $(-C)\cap  \mathbb{H}_p^n$ is a fundamental domain for the action of the discrete group of isometries $W$ on $\mathbb{H}_p^n$ described in Corollary~\ref{cor:action}.
  \end{enumerate}
\end{prop}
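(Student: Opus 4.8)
The plan is to prove (i) first by establishing the key inclusion $\H^n\subseteq -\mathcal U$, then to deduce (ii) from (i) together with the fundamental-domain property of $C$ on $\mathcal U$, and finally to obtain (iii) by transporting (ii) through the isometry $p$ of Corollary~\ref{cor:action}.

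For (i), the inclusion $(-\mathcal U)\cap\H^n\subseteq\H^n$ is immediate, so the whole content is the reverse inclusion $\H^n\subseteq -\mathcal U = W(-C)$. I would fix the basepoint $x_0=e_{n+1}$, which lies in $\H^n$ (since $q(e_{n+1})=-1$ and its last coordinate is $1$) and in the interior of $-C$ (since $B(e_{n+1},\alpha)<0$ for all $\alpha\in\Delta$ by property~(1) of \S\ref{ss:limitroots}). Given $v\in\H^n$, the orbit $W\cdot v$ is a discrete closed subset of $\H^n$ because $W$ acts properly discontinuously (Proposition~\ref{prop:repHypo}); hence $d(x_0,\cdot)$ attains a minimum on it, say at $v_0=w_0(v)$. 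I would then show that $v_0\in P:=(-C)\cap\H^n$. The key computation uses $\cosh d(x,y)=-B(x,y)$ together with the reflection formula~\eqref{eq:Reflection} and $B(\alpha,\alpha)=1$: for any $\alpha\in\Delta$,
\[
-B(x_0,s_\alpha v_0) = -B(x_0,v_0) + 2\,B(\alpha,v_0)\,B(e_{n+1},\alpha).
\]
If $v_0\notin -C$, then $B(\alpha,v_0)>0$ for some $\alpha\in\Delta$; combined with $B(e_{n+1},\alpha)<0$, the correction term is strictly negative, so $d(x_0,s_\alpha v_0)<d(x_0,v_0)$, contradicting minimality since $s_\alpha v_0\in W\cdot v$. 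Hence $v_0\in P\subseteq -C$ and $v=w_0^{-1}(v_0)\in W(-C)=-\mathcal U$. The projective statement $(-\mathcal U)\cap\H_p^n=\H_p^n$ then follows because $-\mathcal U$ is a cone and every point of $\H_p^n=Q^-\cap\mathcal H$ is a positive multiple of a point of $\H^n$, namely its image under $p$.

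For (ii), coverage is immediate from (i): since each $w\in W$ preserves $\H^n$, one has $\bigcup_{w\in W}w(P)=W(-C)\cap\H^n=(-\mathcal U)\cap\H^n=\H^n$. For the injectivity part, I would use that $C$ is a strict fundamental domain for $W$ on $\mathcal U$ — i.e. two points of $C$ in the same $W$-orbit coincide — which is the standard property of the Tits cone recalled just before the statement; since $-\operatorname{id}$ is central in $\OO_B(V)$ and commutes with $W$, the same holds for $-C$ on $-\mathcal U$. Thus distinct points of $P\subseteq -C$ are never $W$-equivalent, and together with coverage this exhibits $P$ as a fundamental domain. Part (iii) then follows by applying the isometry $p^{-1}$ of Corollary~\ref{cor:action}, which conjugates the $W$-action on $\H^n$ to the projective action on $\H_p^n$: since $-C$ is a cone and $p^{-1}$ rescales each $v\in\H^n$ into $\mathcal H$, one has $p^{-1}(P)=(-C)\cap\H_p^n$, and the isometric image of a fundamental domain is again a fundamental domain.

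The main obstacle is the coverage inclusion $\H^n\subseteq -\mathcal U$ in (i): this is precisely the assertion that the chambers of the reflection group tile all of hyperbolic space, which does \emph{not} follow formally from the equality $\mathcal U=W(C)$ and must be argued separately. The minimization argument above is the cleanest route, and its two essential inputs are proper discontinuity (so that the orbit distance to $x_0$ attains a minimum) and the strict sign condition $B(e_{n+1},\alpha)<0$ (so that a reflection strictly decreases this distance whenever a chamber inequality is violated).
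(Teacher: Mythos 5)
Your proof is correct, but for the crucial part (i) it takes a genuinely different route from the paper. The paper disposes of (i) in two lines by invoking Krammer's result \cite[Proposition 4.6.1]{Kr09}, which says that for a Lorentzian root system the Tits cone $\mathcal U$ meets $Q^-$ in exactly one of its two connected components; since $B(e_{n+1},\alpha)<0$ forces $e_{n+1}\in -C\subseteq-\mathcal U$, the component must be $\{v\in Q^-\mid x_{n+1}>0\}\supseteq\H^n$, and (ii)--(iii) follow as in your write-up from $-C$ being a fundamental domain for $-\mathcal U$. You instead prove the coverage $\H^n\subseteq-\mathcal U$ from scratch by the classical closest-chamber argument: minimize $d(e_{n+1},\cdot)$ over the orbit $W\cdot v$ (which attains its minimum because the discrete group $W\subseteq\mathcal I(\H^n)$ acts properly discontinuously, a fact the paper also uses implicitly in \S2.4), and observe via $\cosh d(x,y)=-B(x,y)$ and the reflection formula that a violated chamber inequality $B(\alpha,v_0)>0$ lets $s_\alpha$ strictly decrease the distance, since $B(e_{n+1},\alpha)<0$. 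Both arguments hinge on the same sign condition on $e_{n+1}$; yours is self-contained and elementary (essentially the Poincar\'e/Vinberg tiling argument), at the cost of being longer, while the paper's is shorter but leans on an external, specifically Lorentzian, structural result about the Tits cone. Your reductions of (ii) and (iii) — strictness of $C$ as a fundamental domain for $\mathcal U$ transported by $-\mathrm{id}$, then conjugation by the radial projection $p$ using that $-C$ is a cone — match the paper's intent, which it compresses into the single sentence that ``the rest follows from the fact that $-C$ is a fundamental domain for $-\mathcal U$.''
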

\begin{proof} By~\cite[Proposition 4.6.1]{Kr09}, we know that either $\mathcal U\cap Q^-=\{v\in Q^-\,|\, x_{n+1}<0\}$ or $\mathcal U \cap Q^-=\{v\in Q^-\,|\, x_{n+1}>0\}$.
We have $B(e_{n+1},\alpha)<0$ for all $\alpha\in \Delta$, by item (1) of \S\ref{ss:limitroots}. Thus we have $e_{n+1}\in -C$.  So~$(-\mathcal U)\cap Q^- =\{v\in Q^-\,|\, x_{n+1}>0\}$.
 Therefore $(-\mathcal U)\cap \mathbb H^n=\mathbb H^n$ and $(-\mathcal U)\cap \mathbb{H}_p^n=\mathbb{H}_p^n$.  The rest of the proposition follows from the fact that $-C$ is a fundamental domain for $-\mathcal U$.
\end{proof}

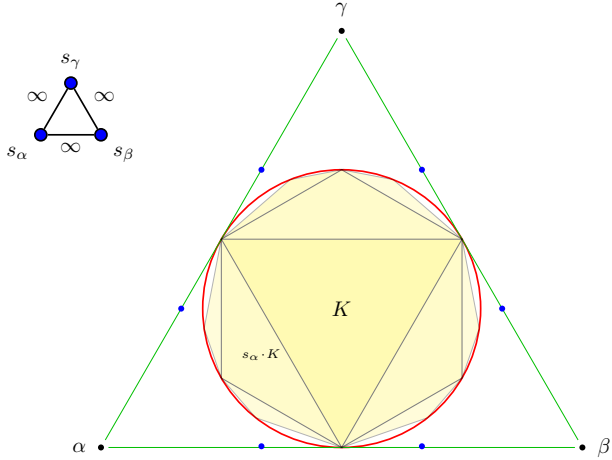
\begin{figure}[!h]
\begin{minipage}[b]{\linewidth}
\centering
\scalebox{0.8}{\begin{tikzpicture}
	[scale=2,
	 q/.style={red,line join=round,thick},
	 racine/.style={blue},
	 racinesimple/.style={black},
	 racinedih/.style={blue},
	 sommet/.style={inner sep=2pt,circle,draw=black,fill=blue,thick,anchor=base},
	 rotate=0]

\def\grosseur{0.0125}
\def\grosseursimple{0.025}

\def\grosseurdih{0.0075}



\draw[q] (2, 1.15470053837925) circle (1.15470053837925) ;

\node[label=left :{$\alpha$}] (a) at (0.000000000000000,0.000000000000000) {};
\fill[racinesimple] (0.000000000000000,0.000000000000000) circle (\grosseursimple);\node[label=right :{$\beta$}] (b) at (4.00000000000000,0.000000000000000) {};
\fill[racinesimple] (4.00000000000000,0.000000000000000) circle (\grosseursimple);\node[label=above :{$\gamma$}] (g) at (2.00000000000000,3.46410161513775) {};
\fill[racinesimple] (2.00000000000000,3.46410161513775) circle (\grosseursimple);
\draw[green!75!black] (a) -- (b) -- (g) -- (a);
\fill[racine] (1.33333333333333,0.0100000000000000) circle (\grosseursimple);
\fill[racine] (2.66666666666667,2.30940107675850) circle (\grosseursimple);
\fill[racine] (3.33333333333333,1.15470053837925) circle (\grosseursimple);
\fill[racine] (0.666666666666667,1.15470053837925) circle (\grosseursimple);
\fill[racine] (2.66666666666667,0.0100000000000000) circle (\grosseursimple);
\fill[racine] (1.33333333333333,2.30940107675850) circle (\grosseursimple);

\coordinate (ancre) at (-0.5,2.6);
\node[sommet,label=below left:$s_\alpha$] (alpha) at (ancre) {};
\node[sommet,label=below right :$s_\beta$] (beta) at ($(ancre)+(0.5,0)$) {} edge[thick] node[auto] {$\infty$} (alpha);
\node[sommet,label=above:$s_\gamma$] (gamma) at ($(ancre)+(0.25,0.43)$) {} edge[thick] node[auto,swap] {$\infty$} (alpha) edge[thick] node[auto] {$\infty$} (beta);

\filldraw[draw= black ,fill= yellow ,opacity= 0.300000000000000 ]
(0.9999999998, 1.7320508078) --
(2.0, 0.0) --
(3.0, 1.7320508082) --
cycle ;


\filldraw[draw= black ,fill= yellow!80 ,opacity= 0.300000000000000 ]
(0.999999999866561, 1.73205080791529)  -- 
(2.00000000000000, 0.000000000000000)  -- 
(0.999999999878540, 0.577350269329875)  -- 
cycle ;

\filldraw[draw= black ,fill= yellow!80 ,opacity= 0.300000000000000 ]
(3.00000000004448, 0.577350269202496)  -- 
(2.00000000000000, 0.000000000000000)  -- 
(3.00000000036438, 1.73205080756888)  -- 
cycle ;

\filldraw[draw= black ,fill= yellow!80 ,opacity= 0.300000000000000 ]
(0.999999999533123, 1.73205080733775)  -- 
(2.00000000000000, 2.30940107675850)  -- 
(3.00000000072876, 1.73205080693775)  -- 
cycle ;


\filldraw[draw= black ,fill= yellow!60 ,opacity= 0.300000000000000 ]
(3.00000000006667, 0.577350269240926)  -- 
(2.00000000000000, 0.000000000000000)  -- 
(2.71428571434520, 0.247435829691337)  -- 
cycle ;

\filldraw[draw= black ,fill= yellow!60 ,opacity= 0.300000000000000 ]
(0.999999999466562, 1.73205080722247)  -- 
(2.00000000000000, 2.30940107675850)  -- 
(1.57142857134677, 2.22692246684851)  -- 
cycle ;

\filldraw[draw= black ,fill= yellow!60 ,opacity= 0.300000000000000 ]
(1.28571428570474, 0.247435829652708)  -- 
(2.00000000000000, 0.000000000000000)  -- 
(1.00000000000000, 0.577350269119501)  -- 
cycle ;

\filldraw[draw= black ,fill= yellow!60 ,opacity= 0.300000000000000 ]
(2.42857142859322, 2.22692246687191)  -- 
(2.00000000000000, 2.30940107675850)  -- 
(3.00000000036438, 1.73205080756888)  -- 
cycle ;

\filldraw[draw= black ,fill= yellow!60 ,opacity= 0.300000000000000 ]
(1.00000000013344, 1.73205080837753)  -- 
(0.857142857142857, 0.989743318610787)  -- 
(1.00000000012146, 0.577350268909127)  -- 
cycle ;

\filldraw[draw= black ,fill= yellow!60 ,opacity= 0.300000000000000 ]
(2.99999999995552, 0.577350269048415)  -- 
(3.14285714285714, 0.989743318610787)  -- 
(2.99999999963562, 1.73205080883112)  -- 
cycle ;

\draw (2,1.15) node{$K$};
\draw (1.33, 0.77) node{{\tiny $s_{\alpha}\!\cdot\!K$}};
\end{tikzpicture}}
\end{minipage}%
\caption{A {\em hyperbolic root system.} Picture in rank $3$ of the first normalized roots converging toward the set of limits of roots $E(\Phi)$ for the root system with diagram in the upper left of the picture. In red is $\widehat Q=\partial \mathbb{H}_p^2$. Reflection hyperplanes for $W$ and $K$ are also represented. Note that here $K\cap \mathbb{H}_p^2=-C\cap \mathbb{H}^2_p$ is not compact but has a finite volume in $\mathbb{H}_p^2$, see~\S\ref{sss:hyper}.}
\label{fig:4}
\end{figure}

\subsubsection{Hyperbolic Coxeter groups}\label{sss:hyper}
We define two strict subclasses of weakly hyperbolic root systems, after  \cite[\S9]{Dy12} or \cite[\S4.1]{DyHoRi13}. Let $W$ be a discrete reflection group on $\mathbb{H}^n$, and let $(\Phi,\Delta)$ be its associated (weakly hyperbolic) based root system. We say that
\begin{itemize}
\item $W$ is {\em hyperbolic}, and $(\Phi,\Delta)$ is a {\em hyperbolic} root system, if and only if $B$ restricts to a positive form on each proper face of $\cone(\Delta)$, i.e. $-C\subseteq \conv(\widehat\Delta)$ (an example is given in Figure~\ref{fig:4});
\item $W$ is {\em compact hyperbolic}, and $(\Phi,\Delta)$ is a {\em compact hyperbolic} root system, if and only if $B$ restricts to a positive definite form on each proper face of $\cone(\Delta)$, i.e. $-C\subseteq \textrm{int}(\conv(\widehat \Delta))$ (an example is given in Figure~\ref{fig:5}). 
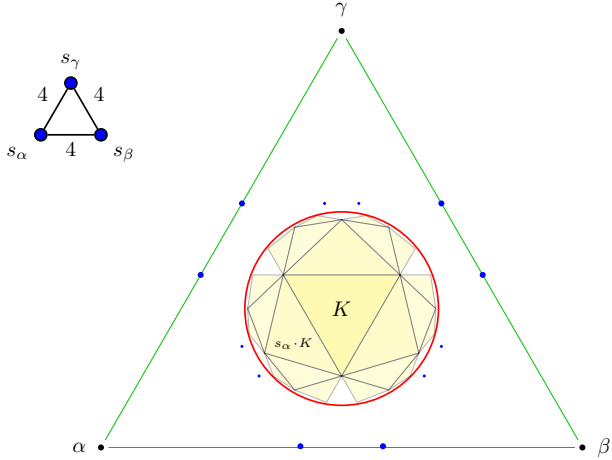
\begin{figure}[!h]
\begin{minipage}[b]{\linewidth}
\centering
\scalebox{0.8}{\begin{tikzpicture}
	[scale=2,
	 q/.style={red,line join=round,thick},
	 racine/.style={blue},
	 racinesimple/.style={black},
	 racinedih/.style={blue},
	 sommet/.style={inner sep=2pt,circle,draw=black,fill=blue,thick,anchor=base},
	 rotate=0]

\def\grosseur{0.0125}
\def\grosseursimple{0.025}

\def\grosseurdih{0.0075}




\draw[q] (2, 1.15470053837925) circle (0.804389105046863) ;

\node[label=left :{$\alpha$}] (a) at (0.000000000000000,0.000000000000000) {};
\fill[racinesimple] (0.000000000000000,0.000000000000000) circle (\grosseursimple);\node[label=right :{$\beta$}] (b) at (4.00000000000000,0.000000000000000) {};
\fill[racinesimple] (4.00000000000000,0.000000000000000) circle (\grosseursimple);\node[label=above :{$\gamma$}] (g) at (2.00000000000000,3.46410161513775) {};
\fill[racinesimple] (2.00000000000000,3.46410161513775) circle (\grosseursimple);
\draw[green!75!black] (a) -- (b) -- (g) -- (a);
\fill[racine] (0.828427124746190,1.43487787042860) circle (\grosseursimple);
\fill[racine] (2.34314575050762,0.0100000000000000) circle (\grosseursimple);
\fill[racine] (1.17157287525381,2.02922374470915) circle (\grosseursimple);
\fill[racine] (3.17157287525381,1.43487787042860) circle (\grosseursimple);
\fill[racine] (1.65685424949238,0.0100000000000000) circle (\grosseursimple);
\fill[racine] (2.82842712474619,2.02922374470915) circle (\grosseursimple);

\fill[racine] (2.14213562373095,2.02922374470915) circle (\grosseur);
\fill[racine] (1.85786437626905,2.02922374470915) circle (\grosseur);
\fill[racine] (2.82842712474619,0.840531996148050) circle (\grosseur);
\fill[racine] (1.31370849898476,0.594345874280551) circle (\grosseur);
\fill[racine] (2.68629150101524,0.594345874280551) circle (\grosseur);
\fill[racine] (1.17157287525381,0.840531996148050) circle (\grosseur);

\coordinate (ancre) at (-0.5,2.6);
\node[sommet,label=below left:$s_\alpha$] (alpha) at (ancre) {};
\node[sommet,label=below right :$s_\beta$] (beta) at ($(ancre)+(0.5,0)$) {} edge[thick] node[auto] {4} (alpha);
\node[sommet,label=above:$s_\gamma$] (gamma) at ($(ancre)+(0.25,0.43)$) {} edge[thick] node[auto,swap] {4} (alpha) edge[thick] node[auto] {4} (beta);

\filldraw[draw= black ,fill= yellow ,opacity= 0.300000000000000 ]
(2.485281374, 1.4348778704) --
(2.0, 0.5943458742) --
(1.5147186258, 1.4348778704) --
cycle ;


\filldraw[draw= black ,fill= yellow!80 ,opacity= 0.300000000000000 ]
(1.35924551799730, 0.784760765770221)  -- 
(2.00000000007939, 0.594345874223593)  -- 
(1.51471862577148, 1.43487787037299)  -- 
cycle ;

\filldraw[draw= black ,fill= yellow!80 ,opacity= 0.300000000000000 ]
(2.48528137428710, 1.43487787012804)  -- 
(1.99999999992061, 0.594345874223593)  -- 
(2.64075448202022, 0.784760765696952)  -- 
cycle ;

\filldraw[draw= black ,fill= yellow!80 ,opacity= 0.300000000000000 ]
(2.48528137398632, 1.43487787045720)  -- 
(2.00000000000000, 1.89458008377702)  -- 
(1.51471862581368, 1.43487787045720)  -- 
cycle ;


\filldraw[draw= black ,fill= yellow!60 ,opacity= 0.300000000000000 ]
(2.39052429178173, 0.478292623520252)  -- 
(1.99999999988772, 0.594345874280551)  -- 
(2.64075448201283, 0.784760765677444)  -- 
cycle ;

\filldraw[draw= black ,fill= yellow!60 ,opacity= 0.300000000000000 ]
(1.60947570824960, 1.83110845325340)  -- 
(2.00000000004342, 1.89458008376996)  -- 
(1.51471862579808, 1.43487787048422)  -- 
cycle ;

\filldraw[draw= black ,fill= yellow!60 ,opacity= 0.300000000000000 ]
(1.35924551807178, 0.784760765573826)  -- 
(2.00000000011228, 0.594345874280551)  -- 
(1.60947570824969, 0.478292623481762)  -- 
cycle ;

\filldraw[draw= black ,fill= yellow!60 ,opacity= 0.300000000000000 ]
(2.48528137414334, 1.43487787072917)  -- 
(1.99999999995658, 1.89458008376996)  -- 
(2.39052429174389, 1.83110845328062)  -- 
cycle ;

\filldraw[draw= black ,fill= yellow!60 ,opacity= 0.300000000000000 ]
(1.35924551797754, 0.784760765794419)  -- 
(1.21895141648864, 1.15470053838558)  -- 
(1.51471862572478, 1.43487787037299)  -- 
cycle ;

\filldraw[draw= black ,fill= yellow!60 ,opacity= 0.300000000000000 ]
(2.48528137433380, 1.43487787012804)  -- 
(2.78104858351136, 1.15470053838558)  -- 
(2.64075448203998, 0.784760765721150)  -- 
cycle ;


\filldraw[draw= black ,fill= yellow!60 ,opacity= 0.300000000000000 ]
(1.87324262455499, 0.374795659876179)  -- 
(2.00000000007939, 0.594345874337509)  -- 
(1.60947570826081, 0.478292623474515)  -- 
cycle ;

\filldraw[draw= black ,fill= yellow!60 ,opacity= 0.300000000000000 ]
(2.20101012679684, 1.92725011415193)  -- 
(1.99999999993859, 1.89458008375292)  -- 
(2.39052429173480, 1.83110845328786)  -- 
cycle ;

\filldraw[draw= black ,fill= yellow!60 ,opacity= 0.300000000000000 ]
(1.26120387497776, 1.43487787042113)  -- 
(1.21895141649015, 1.15470053835764)  -- 
(1.51471862570918, 1.43487787040000)  -- 
cycle ;

\filldraw[draw= black ,fill= yellow!60 ,opacity= 0.300000000000000 ]
(2.76955262169143, 0.942505626685999)  -- 
(2.78104858350771, 1.15470053840981)  -- 
(2.64075448205124, 0.784760765724497)  -- 
cycle ;

\filldraw[draw= black ,fill= yellow!60 ,opacity= 0.300000000000000 ]
(2.39052429166983, 0.478292623447298)  -- 
(1.99999999992061, 0.594345874337509)  -- 
(2.12675737540241, 0.374795659859466)  -- 
cycle ;

\filldraw[draw= black ,fill= yellow!60 ,opacity= 0.300000000000000 ]
(1.60947570834107, 1.83110845332635)  -- 
(2.00000000006141, 1.89458008375292)  -- 
(1.79898987322326, 1.92725011416213)  -- 
cycle ;

\filldraw[draw= black ,fill= yellow!60 ,opacity= 0.300000000000000 ]
(1.35924551786415, 0.784760765828115)  -- 
(1.21895141649229, 1.15470053840981)  -- 
(1.23044737830025, 0.942505626696196)  -- 
cycle ;

\filldraw[draw= black ,fill= yellow!60 ,opacity= 0.300000000000000 ]
(2.48528137449082, 1.43487787040000)  -- 
(2.78104858350985, 1.15470053835764)  -- 
(2.73879612503588, 1.43487787042113)  -- 
cycle ;

\filldraw[draw= black ,fill= yellow!60 ,opacity= 0.300000000000000 ]
(2.39052429179793, 0.478292623526608)  -- 
(2.56854249492711, 0.594345874280551)  -- 
(2.64075448201692, 0.784760765660333)  -- 
cycle ;

\filldraw[draw= black ,fill= yellow!60 ,opacity= 0.300000000000000 ]
(1.60947570823188, 1.83110845324441)  -- 
(1.38796125035268, 1.65442808484959)  -- 
(1.51471862575138, 1.43487787048422)  -- 
cycle ;

\filldraw[draw= black ,fill= yellow!60 ,opacity= 0.300000000000000 ]
(1.35924551806769, 0.784760765556715)  -- 
(1.43145750507289, 0.594345874280551)  -- 
(1.60947570823350, 0.478292623488118)  -- 
cycle ;

\filldraw[draw= black ,fill= yellow!60 ,opacity= 0.300000000000000 ]
(2.48528137419004, 1.43487787072917)  -- 
(2.61203874964732, 1.65442808484959)  -- 
(2.39052429176161, 1.83110845327163)  -- 
cycle ;

\draw (2,1.15) node{$K$};
\draw (1.6, 0.86) node{{\tiny $s_{\alpha}\!\cdot\!K$}};

\end{tikzpicture}}
\end{minipage}%
\caption{A {\em compact hyperbolic root system.} Picture in rank $3$ of the first normalized roots converging toward the set of limits of roots $E(\Phi)$ for the root system with diagram in the upper left of the picture. In red is $\widehat Q=\partial \mathbb{H}^p_n$. Reflection hyperplanes for $W$ and $K$ are also represented. Note that here $K\cap \mathbb{H}^2_p=-C\cap \mathbb{H}^2_p$ is compact in $\mathbb{H}_p^2$.}
\label{fig:5}
\end{figure}

\end{itemize}
The example in Figure~\ref{fig:3} is neither compact hyperbolic nor hyperbolic, but only weakly hyperbolic, and $K\cap \mathbb{H}^2_p\subsetneq -C\cap \mathbb{H}^2_p$.

In the specific case where $\Delta$ is a basis of the Lorentzian space (i.e., the rank of $W$ as a Coxeter group is $n+1$), these notions correspond to the usual definitions using the fundamental domain $P$ (defined in Proposition~\ref{prop:Tits}), see for example~\cite[\S6.8]{Hu90}:
\begin{itemize}
\item $W$ is hyperbolic if $P$ has finite volume;
\item $W$ is compact hyperbolic if $P$ is compact.
\end{itemize}

\begin{remark}\label{rem:hyp} We should be careful with the terminology here, because the properties studied often depend on the geometry, i.e., on $W$ as a discrete reflection group on $\mathbb{H}^n$ and not  only as an abstract Coxeter group. For instance, the  Coxeter systems corresponding to the based root systems in Figure~\ref{fig:3} and Figure~\ref{fig:4} are isomorphic as Coxeter systems, but as discrete reflection groups, one is hyperbolic and the other one is only weakly hyperbolic. 
\begin{enumerate}
\item For this reason, we do not say that a Coxeter group associated to a weakly hyperbolic root system is a ``weakly hyperbolic Coxeter group'': for example,  the universal Coxeter group of rank $4$ admits a representation with a weakly hyperbolic root system, but it also  admits a representation with a root system of signature $(2,2)$, which is therefore not weakly hyperbolic, see \cite[Remark 4.3 and Figure~5]{DyHoRi13}. 

\item In this setting, we should also avoid the terminology ``hyperbolic Coxeter group''. Indeed, for instance, the universal Coxeter group of rank $3$ admits a geometric representation attached to a weakly hyperbolic, non hyperbolic,  root system (see Figure~\ref{fig:3}), and a geometric representation attached to a hyperbolic root system (see Figure~\ref{fig:4}). This can happen when at least one of the labels of the Coxeter graph is $\infty$, i.e., when there are several choices for the value of $B(\alpha,\beta)\leq -1$ for some simple roots $\alpha,\beta\in \Delta$. 

\item In the case of compact hyperbolic root systems of rank $\geq 3$, if $\Delta$ is a basis of the Lorentzian space, then the hyperbolic hyperplanes associated to simple roots cannot be parallel or ultra-parallel (they always intersect). Indeed, these types of Coxeter groups are classified (they are such that all strict parabolic subgroups have finite type, see \cite[\S6.9]{Hu90}), and none of them admits a label $\infty$ in the Coxeter graph. This implies that the issues (1)-(2) do not arise in these cases.
  
  \item In the general case (when $\Delta$ is not necessarily a basis of the Lorentzian space), the root system may be compact hyperbolic even if there are some labels $\infty$ in the Coxeter graph (so, even if the associated (abstract) Coxeter group is \emph{not} compact hyperbolic in the sense of \cite[\S6.8]{Hu90}). For example, the reflection group on $\mathbb{H}^2$ generated by the reflections in the sides of a right-angled pentagon is compact hyperbolic.
\end{enumerate}
\end{remark}

   From \cite[Theorem 4.4]{DyHoRi13} we have the following characterization of the hyperbolic root systems by their limit sets.

\begin{prop} Let $(\Phi,\Delta)$ be an irreducible indefinite\footnote{This means that the Coxeter graph is connected and that the Coxeter group is neither finite nor affine, see for instance \cite{DyHoRi13} for more details.} based root system of rank $\geq 4$. Then $(\Phi,\Delta)$ is a hyperbolic root system if and only if $E(\Phi)=\widehat Q$.
\end{prop}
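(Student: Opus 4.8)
The plan is to treat the two implications separately, reducing the direct implication to a tiling argument and the converse to the elementary geometry of the simplex $\conv(\h\Delta)$ relative to the closed ball $\ol{D_1^n}=\ol{Q^-}\cap\mathcal H$. Throughout I would use the two facts already recorded: that $\h\Phi\subseteq\conv(\h\Delta)$, so $E(\Phi)\subseteq\conv(\h\Delta)\cap Q=\conv(\h\Delta)\cap\h Q$ (this set being closed); and that $E(\Phi)=\ol Z\cap Q$ with $\ol Z=\conv(E(\Phi))$ (the Remark after Theorem~\ref{thm:main2}). I would also use the signature dictionary for standard parabolics: since $(V,B)$ is Lorentzian, for $I\subseteq S$ the restriction of $B$ to $\Span(\Delta_I)$ is positive definite, degenerate, or Lorentzian according as $W_I$ is finite, affine, or indefinite; in the last case $\Span(\Delta_I)$ contains a time-like vector. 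In particular, if $W_I$ is indefinite and $I\subseteq J$, then $\Span(\Delta_J)\supseteq\Span(\Delta_I)$ still carries a time-like vector, and, the ambient form being Lorentzian, $W_J$ is indefinite as well.

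For the implication ``$W$ hyperbolic $\Rightarrow E(\Phi)=\h Q$'' I would argue by tiling, with no restriction on the rank. If $W$ is hyperbolic then $-C\subseteq\conv(\h\Delta)$ by \S\ref{sss:hyper}, so $(-C)\cap\H_p^n\subseteq(-C)\cap\conv(\h\Delta)=K$ and hence $K\cap\H_p^n=(-C)\cap\H_p^n$ equals the fundamental domain of Proposition~\ref{prop:Tits}(iii). Since the $W$-orbit of this fundamental domain covers $\H_p^n$ and $W$ preserves $\H_p^n$, we get $Z\cap\H_p^n=W\cdot(K\cap\H_p^n)=\H_p^n$, whence $\ol Z\supseteq\ol{\H_p^n}=\ol{D_1^n}$. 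Combined with the standing inclusion $\ol Z\subseteq\ol{Q^-}\cap\mathcal H=\ol{D_1^n}$ this forces $\ol Z=\ol{D_1^n}$, and therefore $E(\Phi)=\ol Z\cap Q=\ol{D_1^n}\cap Q=\h Q$.

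For the converse I would argue by contraposition, and here the hypotheses enter. Assume $W$ is indefinite but not hyperbolic. By the characterization of hyperbolic (quasi-Lann\'er) Coxeter groups behind the classification in \cite[\S6.9]{Hu90}, reformulated in the root-system language in \cite[Theorem~4.4]{DyHoRi13}, some proper standard parabolic subgroup $W_I$ is indefinite; by the monotonicity noted above we may take $|I|=n$, say $I=S\setminus\{s_\alpha\}$. Then $\Span(\Delta_I)$ is a Lorentzian hyperplane of $V$, so the facet $F_\alpha=\conv(\h\Delta\setminus\{\h\alpha\})$ of the simplex $\conv(\h\Delta)$ has affine hull $\Span(\Delta_I)\cap\mathcal H$ passing through the interior of the ball $D_1^n=Q^-\cap\mathcal H$. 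The spherical cap of $\h Q$ cut off on the far side of this facet-hyperplane from $\conv(\h\Delta)$ thus lies outside the simplex; since $E(\Phi)\subseteq\conv(\h\Delta)\cap\h Q$, these boundary points are missing from $E(\Phi)$, so $E(\Phi)\subsetneq\h Q$.

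The main obstacle is the converse, and more precisely the input from \cite[\S6.9]{Hu90} and \cite[Theorem~4.4]{DyHoRi13} that lets us replace ``not hyperbolic'' by the concrete statement ``some maximal proper parabolic is indefinite.'' This combinatorial characterization is what stands in for the hyperbolic-geometry fact that a geometrically finite reflection group whose limit set is the whole sphere has finite covolume, for which, as noted after Theorem~\ref{thm:main}, we have no direct proof. Finally, the restriction to irreducible indefinite root systems of rank $\geq 4$ is what makes the hypothesis ``$W$ is hyperbolic'' well posed as a property of the Coxeter system rather than of a chosen geometric representation: by Remark~\ref{rem:hyp} a rank~$3$ group such as the universal Coxeter group may admit both a hyperbolic and a non-hyperbolic geometric representation, whereas in rank $\geq 4$ the hyperbolic type forbids the label $\infty$ and the representation is rigid.
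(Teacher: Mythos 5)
Your argument is correct in substance but takes a genuinely different route from the paper, whose ``proof'' is a one-line deduction from the classification of hyperbolic Coxeter groups in \cite[\S6.9]{Hu90} together with \cite[Theorem~4.4]{DyHoRi13} --- the latter already containing the equivalence ``$(\Phi,\Delta)$ hyperbolic $\Leftrightarrow E(\Phi)=\h Q$'' for irreducible weakly hyperbolic root systems, the classification serving only to make ``$W$ is hyperbolic'' unambiguous in rank $\geq 4$, exactly as you explain in your closing paragraph. What you do instead is reprove that equivalence inside the framework of \S\ref{se:Coxeter}: the forward implication by tiling, $Z\cap\H_p^n\supseteq W\cdot\bigl((-C)\cap\H_p^n\bigr)=\H_p^n$ via Proposition~\ref{prop:Tits}, combined with $\ol Z\subseteq\ol{D_1^n}$ and $E(\Phi)=\ol Z\cap Q$; the converse by the elementary observation that an indefinite maximal proper parabolic forces a facet hyperplane of $\conv(\h\Delta)$ to cut the open ball, so a cap of $\h Q$ escapes $\conv(\h\Delta)\supseteq E(\Phi)$. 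This buys a self-contained proof of the substantive direction that the paper obtains only by citation, and it isolates exactly where the identity $E(\Phi)=\ol Z\cap Q$ enters.

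Two caveats on your converse. First, you do not need \cite[\S6.9]{Hu90} to pass from ``not hyperbolic'' to ``some proper standard parabolic is indefinite'': by the definition in \S\ref{sss:hyper}, hyperbolicity \emph{means} that $B$ is positive on every proper face of $\cone(\Delta)$, so its negation is immediately the statement you want, and invoking the classification there is superfluous (and slightly circular in spirit). Second, and more seriously, the facet argument implicitly assumes that $\Delta$ is a basis of $V$, i.e.\ that the rank equals $n+1$, so that $\conv(\h\Delta)$ is a simplex and $\conv(\h\Delta\setminus\{\h\alpha\})$ spans an affine hyperplane supporting the polytope. The paper assumes only that $\Delta$ is positively independent with $\Span(\Delta)=V$, so the rank may exceed $n+1$; in that case $\Span(\Delta_{S\setminus\{s_\alpha\}})$ can equal $V$, there is no ``far side'' of the facet, and your contrapositive produces nothing. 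For that case one must fall back on \cite[Theorem~4.4]{DyHoRi13}. In the standard situation where $\Delta$ is linearly independent --- which is the case in every example of the paper --- your proof is complete.
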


\begin{remark} The statement in \cite[Theorem 4.4]{DyHoRi13}  does not require the signature to be Lorentzian, it only requires that $(W,S)$ is irreducible. 
\end{remark}

In other words, with Theorem~\ref{thm:main} we obtain the following corollary.

\begin{cor} Let $W$ be a discrete reflection group on $\mathbb{H}^n$. Then $W$ is hyperbolic if and only if its limit set $\Lambda(W)$  is equal to $\widehat Q$.
\end{cor}


\section{An Example: Universal Coxeter Group and Apollonian Gaskets}\label{ss:univers}

As an illustration, we describe here, in the light of the preceding sections, the limit set  appearing in Figure \ref{fig:intro}, which is an Apollonian gasket. 

\subsection{Conformal models of the hyperbolic space}
We first introduce two conformal models of the hyperbolic space, the \emph{conformal ball model} and  the \emph{upper halfspace model} that turn out to be more practical to deal with the geometry because their  isometries are M\" obius transformations. For details we refer the reader to \cite[Chapter 4]{Ra06} and \cite[Chapter A]{BP92}. We use the notation $\|.\|$ for the Euclidean norm of $\mathbb{R}^n$.

\subsubsection{Inversions and the M\" obius group}\label{sec:mobiusgroup}

In the Euclidean space $\mathbb{R}^n$ endowed with its standard scalar product, let $\mathcal{S}(a,r)$ denote a sphere with center $a$ and radius~$r$. The \emph{inversion} with respect to $\mathcal{S}(a,r)$ is the map:
\[
\begin{array}{rcll}
i_{a,r}:&\mathbb{R}^n\setminus\{a\}&\longrightarrow &\mathbb{R}^n\setminus\{a\}\\
&x&\longmapsto &a+r^2.\frac{x-a}{\|x-a\|^2}
\end{array}
\] 
It is an involutive diffeomorphism that is  conformal and changes spheres into spheres. It extends to an involution $I_{a,r}$ of the one point compactification $\overline{\mathbb{R}^n}=\mathbb{R}^n\cup\{\infty\}$ by setting $I_{a,r}(a)=\infty$ and $I_{a,r}(\infty)=a$, which is a conformal involutive diffeomorphism once  $\overline{\mathbb{R}^n}$ is given its standard diffeomorphic and conformal structures. Then $I_{a,r}$ changes spheres/hyperplanes into spheres/hyperplanes. Its  set of fixed points is the whole sphere $\mathcal{S}(a,r)$, and conformality implies that a sphere/hyperplane $H$ is stable under $I_{a,r}$ if and only if $H$ intersects $\mathcal{S}(a,r)$ orthogonally.
Whenever $I_{a,r}$ changes the sphere  $\mathcal{S}_{b,\rho}$ into a hyperplane $H$ then $I_{a,r}\circ I_{b,\rho}\circ I_{a,r}^{-1}$ is the Euclidean (orthogonal) reflection with respect to $H$. 
The \emph{M\" obius group} of $\overline{\mathbb{R}^n}$ (or $\mathbb{R}^n$) is defined as the group generated by all inversions  in $\overline{\mathbb{R}^n}$ (note that, in particular, it contains all reflections in $\overline{\mathbb{R}^n}$).

\subsubsection{The Conformal Ball Model $\mathbb{H}_c^n$}

Consider the open unit ball embedded in the hyperplane $\mathbb{R}^n\times\{0\}$ of $V$:
\[
D^n=\{(x_1,\ldots,x_{n+1})\in V\,|\, x_{n+1}=0\ \text{and}\ x_1^2+\cdots+x_n^2<1\}
\]
and the 
\emph{stereographic projection} $c$ with respect to $-e_{n+1}$ of $\mathbb{R}^n\times\mathbb{R}_+^*$ onto $\mathbb{R}^n\times\{0\}$:
\[
\begin{array}{rcll}c:& \mathbb{R}^n\times\mathbb{R}_+^* &\longrightarrow &\mathbb{R}^n\times\{0\}\\
&(x_1,\ldots,x_{n+1})&\longmapsto &\frac{(x_1,\ldots,x_n,0)}{1+x_{n+1}}
\end{array}
\]
One verifies that $c$ restricted to the hyperboloid model $\mathbb{H}^n$ is a diffeomorphism onto $D^n$ (cf. \cite{Ra06, BP92}). Once $D^n$ is endowed with the pull-back metric
(the Riemannian metric $\mathrm{d}s=\frac{2\mathrm{d}x}{1-\|x\|^2}$) one obtains the \emph{conformal ball model} of the hyperbolic space, that we denote by $\mathbb{H}^n_c$ (cf. Figure \ref{fig:poincaremodel}).

The (hyperbolic) hyperplanes in $\mathbb{H}_c^n$ are the intersections with $D^n$ of the Euclidean spheres and hyperplanes in $\mathbb{R}^n\times\{0\}$ that are perpendicular to the boundary sphere $\partial\mathbb{H}^n_c:=\partial\overline{D^n}$. 
The hyperbolic reflection across the hyperplane $H$ is the restriction of the inversion with respect to the Euclidean sphere or hyperplane in $\mathbb{R}^n\times\{0\}$ containing $H$.  
\begin{figure}[!h]
\begin{minipage}[b]{\linewidth}
\centerline{
\includegraphics[scale=1]{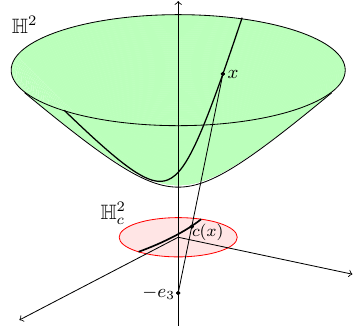}
}
\end{minipage}%
\caption{The conformal disk model.}
\label{fig:poincaremodel}
\end{figure}
It turns out that the group of isometries $\mathcal{I}(\mathbb{H}^n_c)$ is the subgroup of the M\" obius group of $\mathbb{R}^n\times\{0\}$ that leaves $D^n$ invariant  or, equivalently, generated by inversions with respect to spheres that are perpendicular to the boundary\footnote{It contains all reflections with respect to hyperplanes that are perpendicular to the boundary.}.  The model is conformal: the hyperbolic and Euclidean angles are the same.\\

The map $c\circ p:\mathbb{H}^n_p\longrightarrow\mathbb{H}^n_c$ is an isometry from the projective model to the conformal ball model and a simple computation shows that:
\[
c\circ p(x_1,\ldots,x_n,1)=\frac{(x_1,\ldots,x_n,0)}{1+\sqrt{1-x_1^2-\cdots-x_n^2}}.
\]
so that it obviously extends to a homeomorphism from $\overline{\mathbb{H}^n_p}=\mathbb{H}^n_p\cup\partial\mathbb{H}^n_p$ to $\overline{\mathbb{H}^n_c}=\mathbb{H}^n_c\cup\partial\mathbb{H}^n_c$ that restricted  to $\partial\mathbb{H}^n_p\longrightarrow\partial\mathbb{H}^n_c$ is the translation with vector $-e_{n+1}$.

\subsubsection{The Upper Half Space Model $\mathbb{H}_u^n$}\label{sec:upperhalfspace}  
Consider the differentiable map:
\[
u: \begin{array}{cll} D^n&\longrightarrow &\mathbb{R}^n\\
x&\longmapsto &2\,\dfrac{x+e_n}{\|x+e_n\|^2}-e_n
\end{array}
\]
 One verifies  (cf. \cite{BP92}, Chapter A) that $u$ is a diffeomorphism from $D^n$ onto the open upper half-space: $\mathbb{R}^{n-1}\times\mathbb{R}_+^*=\{x\in\mathbb{R}^n\,|\, x_n>0\}$, which, in fact, is the inversion with respect to the sphere with radius $\sqrt{2}$ and center $-e_n$ (cf. \S\ref{sec:mobiusgroup} and Figure \ref{fig:upperhalfspace}). Once $D^n$ is identified with the conformal ball model $\mathbb{H}^n_c$ and $\mathbb{R}^{n-1}\times\mathbb{R}_+^*$ is endowed with the pull-back metric with respect to $u^{-1}$, we obtain the \emph{upper half-space model} $\mathbb{H}_u^n$ of the hyperbolic space with Riemannian metric $\mathrm{d}s^2=(\mathrm{d}x_1^2+ \dots + \mathrm{d}x_{n-1}^2+{\mathrm{d}x_n^2})/{x_n^2}$. The hyperplanes in $\mathbb{H}^n_u$ are euclidean half-spheres with centers on the boundary $\mathbb{R}^{n-1}\times\{0\}$ as well as vertical affine hyperplanes. The model is conformal: hyperbolic angles agree with Euclidean ones. A reflection with respect to a hyperplane $H$ is a Euclidean reflection with respect to $H$ (when $H$ is a ``vertical'' Euclidean hyperplane) or an inversion with respect to $H$ (when $H$ is a ``half-sphere''). 
\begin{figure}[!h]
\begin{minipage}[b]{\linewidth}
\centerline{
\includegraphics[scale=0.5]{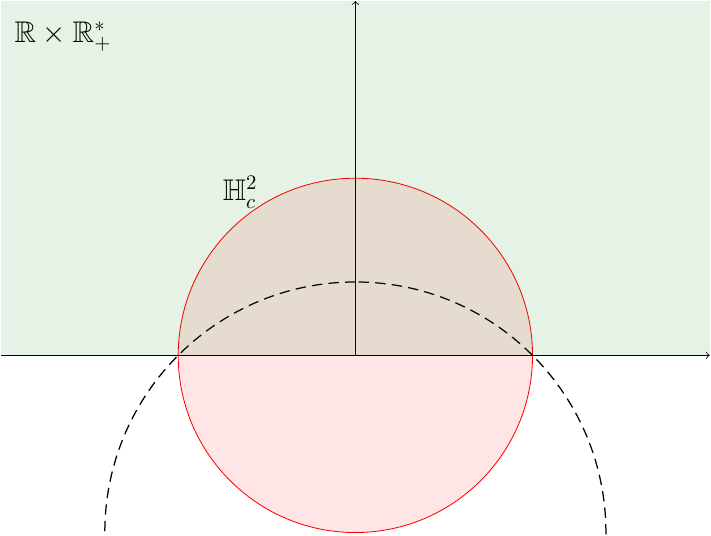}\qquad
\includegraphics[scale=0.5]{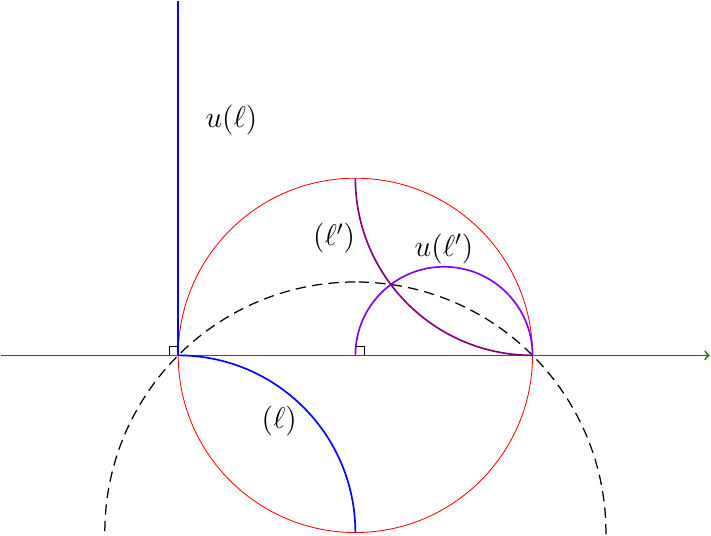}
}
\end{minipage}%
\caption{The inversion $u$ with respect to the sphere with center $-e_n$ and radius $\sqrt{2}$ (in dash) sends the conformal disk model (in red) onto the upper half-plane model (in green). On the right side two infinite geodesics in $\mathbb{H}^2_c$, $(\ell)$ and $(\ell')$, and their images by $u$, which are geodesics of $\mathbb{H}^2_u$.}
\label{fig:upperhalfspace}
\end{figure}

The group of isometries of $\mathbb{H}^n_u$ is the subgroup of the M\" obius group of $\mathbb{R}^n$ that stabilizes $\mathbb{R}^{n-1}\times\mathbb{R}_+^*$, or equivalently, the group generated by inversions\footnote{It contains all reflections with respect to hyperplanes perpendicular to $\mathbb{R}^{n-1}\times\{0\}$.} with respect to spheres perpendicular to the boundary $\mathbb{R}^{n-1}\times\{0\}$.   

The hyperbolic boundary $u(\partial\mathbb{H}^n_c)$ of $\mathbb{H}^n_u$ is the one point compactification $\partial\mathbb{H}^n_u:=\left(\mathbb{R}^{n-1}\times\{0\}\right)\cup\{\infty\}$.  \\

Both conformal ball models and upper half-space models are conformal, {\it i.e.}, hyperbolic angles (defined by their respective Riemannian metrics) coincide with Euclidean angles (with respect to the metrics induced by the natural embeddings in the Euclidean space). Furthermore, the hyperbolic circles in those models coincide with Euclidean circles, but usually with different center and radius (cf. \cite{Ra06, BP92}).

\subsection{Representation of the universal Coxeter group of rank 3 as a discrete subgroup of isometries of $\mathbb H^2$ with finite covolume}
\label{part:Z2*Z2*Z2}
Consider  an ideal triangle $(abc)$ in  $\mathbb{H}_c^2$, with $a,b,c$ three distinct points in $\partial \mathbb{H}_c^2$. Its sides $(ab)$, $(ac)$ and $(bc)$ are infinite geodesics that are pairwise parallel, therefore with angles 0, and $(abc)$ has a finite area equal to $\pi$ (\cite{Ra06}, \S 3.5, Lemma 4). Let $G\subset \mathcal{I}({\mathbb H^2_c})$
be the hyperbolic reflection group  generated by the (hyperbolic) reflections $s_{(ab)}$, $s_{(ac)}$, $s_{(bc)}$ with respect to the sides of $(abc)$. The group $G$ is a generalized simplex reflection group in the sense of \cite{Ra06} (cf. \S 7.3). The interior~$\mathcal{P}$ of the domain delimited by $(abc)$ is a fundamental region (cf. \cite{Ra06}, \S 6) for the action of $G$ on $\mathbb H^2_c$ so that $G$ is a discrete group (Theorem 6.6.3, \cite{Ra06}), and~$G$ together with its generating set  
is the universal Coxeter group of rank 3:
\[ G= \left< s_{(ab)}, s_{(ac)}, s_{(bc)} \middle| s_{(ab)}^2=s_{(ac)}^2=s_{(bc)}^2=1 \right>\]
 isomorphic to $\mathbb{Z}/2\mathbb{Z}*\mathbb{Z}/2\mathbb{Z}*\mathbb{Z}/2\mathbb{Z}$ (Theorem 7.1.4, \cite{Ra06}); see also Figure~\ref{fig:4}.
\begin{figure}[!h]
\centerline{\includegraphics[scale=0.7]{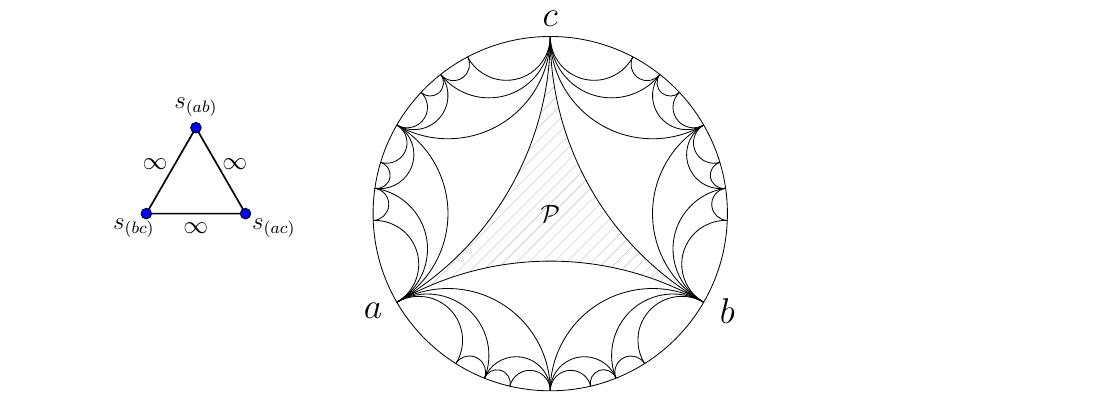}}
\caption{The fundamental domain $\mathcal{P}$ and some of its $G$-translates for the action of $G$ onto the conformal disk model $\mathbb{H}^2_c$.}
\label{jppfig1}
\end{figure}

The limit set $\Lambda(G)=\overline{G\cdot x}\smallsetminus G\cdot x$ of $G$ lies in $\partial \mathbb{H}^2_c$ (Theorem 12.1.2, \cite{Ra06}); it turns out that $\Lambda(G)$ is the whole of $\partial \mathbb{H}^2_c$:

\begin{lem}\label{limitsetG} The limit set of $G$ is
$\Lambda(G)=\partial\mathbb{H}^2_c$.
\end{lem}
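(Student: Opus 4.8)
The plan is to prove the two inclusions. The inclusion $\Lambda(G)\subseteq\partial\H^2_c$ always holds — the orbit $G\cdot x$ accumulates only on the boundary, as recalled just before the statement (see \cite[Theorem~12.1.2]{Ra06}) — so the content is the reverse inclusion $\partial\H^2_c\subseteq\Lambda(G)$. First I would locate three explicit limit points. Since the sides $(ab)$ and $(ac)$ of the ideal triangle share the ideal endpoint $a$, they are asymptotic geodesics, so the product $s_{(ab)}\circ s_{(ac)}$ is a \emph{parabolic} isometry fixing $a$. A parabolic fixed point is always a limit point, hence $a\in\Lambda(G)$; applying the same argument at the other two vertices gives $b,c\in\Lambda(G)$.

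Next I would propagate these three points over the whole boundary. The limit set is closed and $G$-stable (see \S\ref{ss:LimitSet}), so $\ol{G\cdot\{a,b,c\}}\subseteq\Lambda(G)$. Now $G\cdot\{a,b,c\}$ is precisely the set of ideal vertices of the tessellation of $\H^2_c$ by the $G$-translates $g\mathcal{P}$ of the fundamental ideal triangle: each tile $g\mathcal{P}$ has ideal vertices $g\cdot\{a,b,c\}$, and the tiles cover $\H^2_c$ because $\mathcal{P}$ is a fundamental region and $G$ is discrete. Thus everything reduces to the claim that these ideal vertices are \emph{dense} in $\partial\H^2_c$; granting this, $\partial\H^2_c=\ol{G\cdot\{a,b,c\}}\subseteq\Lambda(G)$ and the lemma follows.

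The main obstacle is therefore the density of the tessellation's ideal vertices, and I see three ways to settle it. The cleanest is to observe that $\mathcal{P}$ has finite hyperbolic area ($=\pi$), so $G$ is a discrete group of finite covolume; such a group is of the \emph{first kind}, i.e. $\Lambda(G)=\partial\H^2_c$ (see \cite[\S12.2]{Ra06}), which in fact proves the lemma outright. A more hands-on route is to pass to the upper half-space model $\H^2_u$ of \S\ref{sec:upperhalfspace}, where the orbit of the ideal vertices becomes the Farey set $\mathbb{Q}\cup\{\infty\}$ on $\partial\H^2_u=\R\cup\{\infty\}$, which is manifestly dense. Alternatively, one argues directly: an edge of the tessellation cuts off a boundary arc, the triangle on its far side subdivides that arc at a new ideal vertex, and iterating produces a nested family of edges whose Euclidean diameters tend to $0$, forcing ideal vertices into every boundary neighbourhood. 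I would write up the finite-covolume argument as the main line and keep the Farey picture as the concrete illustration fitting the rest of \S\ref{ss:univers}.
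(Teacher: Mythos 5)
Your proposal is correct, and its main line is a genuinely different route from the one the paper writes out. The paper proves the lemma by a direct construction: given an arbitrary $\omega\in\partial\H^2_c$, it passes to the upper half-space model with $a$ sent to $\infty$ and produces a nested family of half-disk domains, indexed by binary strings, each containing a $G$-translate of the fundamental ideal triangle; picking the orbit point in the triangle at each stage yields a sequence in $G\cdot x$ whose Euclidean distance to $\omega$ is controlled by the shrinking diameters of the half-disks. This is essentially your third fallback (nested edges cutting off shrinking boundary arcs). Your preferred argument --- $\mathcal{P}$ is an ideal triangle of finite area $\pi$, so $G$ has finite covolume and is therefore of the first kind, i.e.\ $\Lambda(G)=\partial\H^2_c$ --- is valid, and it is precisely the alternative the authors acknowledge in the sentence preceding their proof (this representation is a hyperbolic Coxeter group; ``we give here a direct proof for convenience''). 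The trade-off: your route is shorter and cleaner but rests on a black-box theorem from \cite[\S 12.2]{Ra06} and exhibits no explicit convergent sequences, whereas the paper's construction is elementary and self-contained and, more importantly, is the template reused in the rank-$4$ Apollonian proposition, where the limit set is a proper subset of the boundary, the finite-covolume shortcut is unavailable, and one genuinely needs hands-on control of where orbit points accumulate. Your intermediate steps (parabolic fixed points at $a,b,c$ via products of reflections in asymptotic sides, density of the ideal vertices of the tessellation) are correct but become redundant once finite covolume is invoked; note only that the Farey description of $G\cdot\{a,b,c\}$ requires first normalizing the vertices to $0,1,\infty$, which costs nothing up to conjugation.
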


This property can be deduced from the fact that this geometrical representation of the universal Coxeter group~$G$ of rank~$3$ corresponds to a hyperbolic Coxeter root system (see \S3.5.3). Consequently, the fact that the limit set of~$G$ is the whole boundary $\partial\mathbb{H}^2_c$ follows from a more general result stated in~\cite[Theorem 4.4]{DyHoRi13}. We give here a direct proof for convenience.

\begin{proof} Let $\omega\in\partial\mathbb{H}^2_c$;  it suffices to prove that there exists $x\in \mathbb{H}^2_c$ and a sequence $(u_n)_n$ in $G\cdot x$ that converges to $\omega$. Let $x$ be an arbitrary point in the fundamental domain $\mathcal{P}$. We suppose without loss of generality that $\omega\not=a$ and consider the upper half-space model $\mathbb{H}^2_u$  with $a$ sent to $\infty$. In this model the geodesics $(ab)$ and $(ac)$ become vertical lines that enclose a closed region $\mathcal{W}^0$ in $\mathbb{R}\times\mathbb{R}_+$, and $\mathbb{R}\times\mathbb{R}_+$ is the union of the countable family $(\mathcal{W}^n)_{n\in\mathbb{Z}}$ where $\mathcal{W}^n=(s_{(ac)}\circ s_{(ab)})^n(\mathcal{W}^0)$ are $G$-translates of $\mathcal{W}^0$.  

Let $\mathcal{D}^0={\mathcal{W}^0\setminus \mathcal{P}}\subset \mathcal{W}^0$; $\mathcal{D}^0$ is the half-disk domain in $\mathbb{R}\times\mathbb{R}^+$ delimited by $(bc)$ and it contains an uppermost  $G$-translate of $\mathcal{P}$: the ``triangular" domain $\mathcal{P}^0=s_{(bc)}(\mathcal{P})$  with side $(bc)$ (hatched in Figure \ref{jppfig2}). Let $x^0$ denote the unique point in $\mathcal{P}^0\cap G\cdot x$. The two other sides of $\mathcal{P}^0$, namely the infinite parallel hyperbolic geodesics $s_{(bc)}(ac)$ and $s_{(bc)}(ab)$ cut $\mathcal{D}^0$ into $\mathcal{P}^0$ and two half-disk domains $\mathcal{D}^0_0$ and $\mathcal{D}^0_1$. As above $\mathcal{D}^0_i$ ($i=0,1$) decomposes into a triangular domain $\mathcal{P}^0_{i}$, which is a $G$-translate of $\mathcal{P}$,  and two half-disk domains $\mathcal{D}^0_{i0}$ and $\mathcal{D}^0_{i1}$ and we set $x^0_{i}=\mathcal{P}^0_i\cap G\cdot x$, and so on: in this construction for each finite sequence $\sigma$ of $0$ and $1$,  $\mathcal{D}^0_\sigma$ is a half-disk domain in $\mathcal{D}^0$ that decomposes into a $G$-translate of $\mathcal{P}$ and two half-disk domains $\mathcal{D}^0_{\sigma 0}$ (the left one) and $\mathcal{D}^0_{\sigma 1}$ (the right one) and we set $x^0_\sigma$ the unique point in $\mathcal{P}^0_\sigma\cap G\cdot x$. A similar construction yields for all $n\in\mathbb{Z}$ and for any finite sequence $\sigma$ in $\{0,1\}$, a half-disk domain $\mathcal{D}^n_\sigma=(s_{(ac)}\circ s_{(ab)})^n(\mathcal{D}^0_\sigma)$ and a point $x^n_\sigma= (s_{(ac)}\circ s_{(ab)})^n(x^0_\sigma)$ that lies in $\mathcal{D}^n_\sigma\cap G\cdot x$ (see Figure \ref{jppfig2}).
\begin{figure}[!h]
\centerline{\includegraphics[scale=0.7]{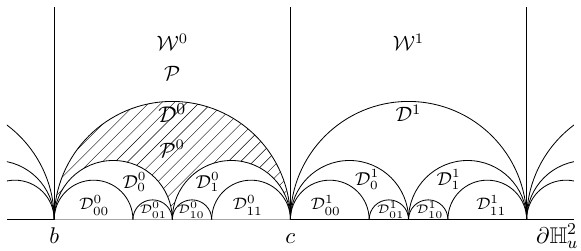}}
\caption{}
\label{jppfig2}

\end{figure}
%

We apply an isometry of $\mathbb{H}^2_c$ that sends $b$ to 0, $c$ to 1 and $\infty$ to $\infty$, so that we suppose in the following that $b=0$ and $c=1$.
It yields the well-known Farey triangulation of $\mathbb{H}^2_c$ (see for instance \cite{Series}). It is intimately related to the Stern-Brocot tree, that enumerates all rational numbers between 0 and 1.

Let us recall some basic facts about the Stern-Brocot tree and highlight the link with the triangulation obtained.\\
Start from the two rational numbers $\frac01$ and $\frac11$ on the first row. The next row is obtained from the previous by inserting between two consecutive rationals $\frac{m}{n}$ and $\frac{m'}{n'}$ the rational $\frac{m+m'}{n+n'}$ ; repeat infinitely this process. The Stern-Brocot tree is the graph whose :
\begin{enumerate}
\item[--] vertices are the rational that appear on a row and do not already appear on the previous row ; $\frac01$ and $\frac11$ do not count as vertices.
\item[--] edges connect the vertex $\frac{m}{n}$ or $\frac{m'}{n'}$ on a row with the vertex $\frac{m+m'}{n+n'}$ on the next row (see left part of Figure \ref{sternBrocot}). 
\end{enumerate}
and it is easily checked that this process defines an infinite binary tree.

One can show by induction that two consecutive rationals $\frac{m}{n}$ and $\frac{m'}{n'}$ on a row and in this order, satisfy $m'n-mn'=1$ (it turns out that the converse is also true). In particular,  the sequences of rationals on a row are all increasing and by Bézout's theorem, all rationals obtained are irreducible. It's not difficult to state (and a well-known result) that all rational numbers (strictly) between 0 and 1 appear exactly once as a vertex of the Stern-Brocot tree. 

The link between the Stern-Brocot tree and the Farey triangulation follows from the following fact that can be obtained by a direct computation  (or see \cite{Series}): for any pair of rationals $\frac{m}{n}$ and $\frac{m'}{n'}$ with $|mn'-m'n|=1$, the point $\frac{m'}{n'}\in\partial\mathbb{H}^2_c$ is sent to $\frac{2m+m'}{2n+n'}\in\partial\mathbb{H}^2_c$ under the reflection across the geodesic with endpoints $\frac{m}{n}$ and $\frac{m+m'}{n+n'}$.

Since $\mathcal{D}^0$ has endpoints $\frac{0}{1}$ and $\frac{1}{0}$ and $\mathcal{D}^0_{0}$, $\mathcal{D}^0_{1}$ are parallel with common endpoint $\frac12$, it follows from the previous fact that the endpoints of $\mathcal{D}^0_{\sigma}$ for any finite sequence $\sigma$ in $\{0,1\}$, are consecutive rationals on a row of the Stern-Brocot tree construction. More precisely, the $\mathcal{D}^0_{\sigma}$ for any finite sequence $\sigma$ in $\{0, 1\}$ of length $p$ are in 1-1 correspondence with consecutive rationals on the $(p+1)$-st row of the Stern-Brocot tree  (see Figure \ref{sternBrocot}).   

\begin{figure}[ht]
\centerline{\includegraphics[scale=0.6]{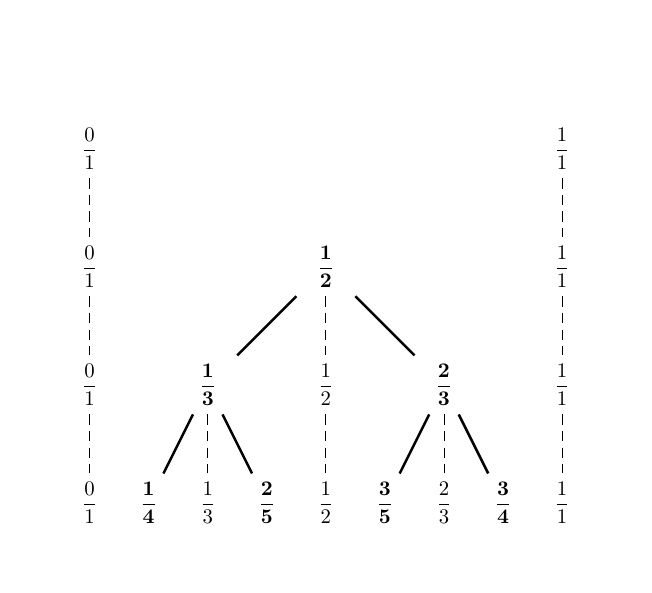}\quad \includegraphics[scale=0.6]{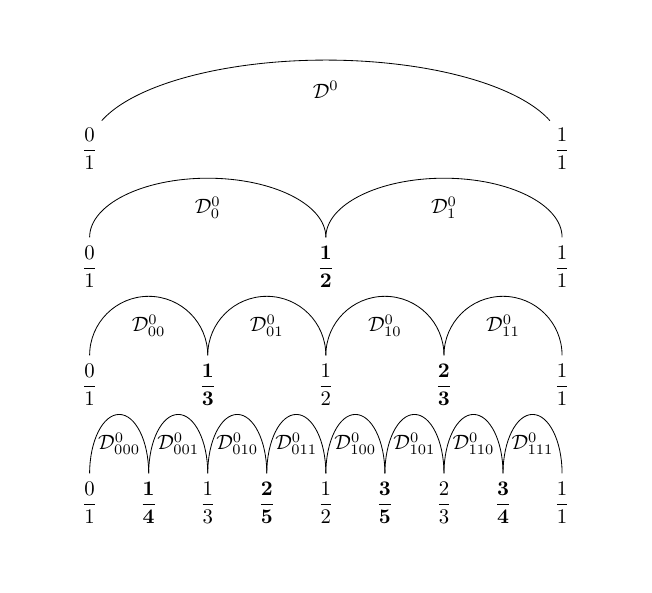}}
\caption{The four first lines of the Stern-Brocot tree (on the left)  and the endpoints of $\mathcal{D}^0_{\sigma}$ with $\sigma$ a sequence of 0,1 of length $<4$ (on the right).}
\label{sternBrocot}
\end{figure}

We now define by induction a sequence $(u_p)_{p\in\mathbb{N}}$ in $G\cdot x$ that converges to $\omega$. By construction $\omega$ lies in (at least) one domain $\mathcal{D}^k$, and up to a translation $(s_{(ab)}\circ s_{(ac)})^k$ we suppose that $\omega$ lies in $\mathcal{D}^0$ ; then set $u_0=x^0$. Whenever $\sigma$ is a (possibly empty) finite sequence of length $p\in\mathbb N$  in $\{0,1\}$ such that $\omega\in \mathcal{D}^0_\sigma$ and $u_p= x^0_\sigma$   then necessarily $\omega\in \mathcal{D}^0_{\sigma 0}$ or $\omega \in\mathcal{D}^0_{\sigma 1}$ (possibly both),   then set respectively $u_{p+1}=x^0_{\sigma 0}$ or  $u_{p+1}=x^0_{\sigma 1}$. 

By construction  $u_p$ lies in a half-disk domain $\mathcal{D}^0_\sigma$ with $\sigma$ a finite sequence in $\{0,1\}$ of length $p$. The two endpoints of $\mathcal{D}^0_\sigma$ are consecutive rationals on the $(p+1)$-st row of the Stern-Brocot tree. To conclude that $\lim u_p = \omega$, it suffices to prove that two consecutive rationals $\frac{m}{n}$ and $\frac{m'}{n'}$ on the $p$-th row of the Stern-Brocot tree satisfy: $\left|\frac{m}{n}-\frac{m'}{n'}\right| \le \frac{1}{p}$. We proceed by induction:
on the first row ($p=1$), the two consecutive rationals are $\frac01$ and $\frac11$ and the assumption holds.\\
Suppose the assumption is true on row $p$, and consider $\frac{m}{n}$ and $\frac{m+m'}{n+n'}$ two consecutive rationals on the $(p+1)$-st row. By hypothesis one has $\left|\frac{m}{n}-\frac{m'}{n'}\right|=\frac{1}{nn'}\leq \frac{1}{p}$, hence $nn'\ge p$. Therefore $\left|\frac{m}{n}-\frac{m+m'}{n+n'}\right|=\frac{1}{n(n+n')} = \frac{1}{n^2+nn'}\leq \frac{1}{p+1}$ since $nn'\ge p$ and $n^2\ge 1$. The assumption remains true on row $p+1$, which concludes the proof. (Note that we have proven, along the way, the density of $\mathbb{Q}$ in $\mathbb{R}$).
\end{proof}

\subsection{The Apollonian gasket}\label{sec:Apolloniangasket} We consider in the conformal disk  model $\mathbb{H}^2_c$ the three infinite geodesics  $(ab)$, $(ac)$ and $(bc)$, as above (see Figure \ref{jppfig1}).

\begin{lem}\label{jpplem1}  For any  three distinct points $a,b,c$ in $\partial\mathbb{H}^2_c$:
\begin{itemize}
\item[(i)] there exist unique  horocycles $h_a$, $h_b$, $h_c$ with limit points $a,b,c$ that are pairwise tangent.
\item[(ii)] $(ab)$ intersects $h_a$ and $h_b$ perpendiculary at the point $h_a\cap h_b$.
\item[(iii)] There exists a unique circle $\mathcal{C}$ passing through the intersection points $h_a\cap h_b$, $h_a\cap h_c$ and $h_b\cap h_c$; moreover $\mathcal{C}$ is tangent to the three geodesics $(ab)$, $(ac)$ and $(bc)$. 
\end{itemize}
\end{lem}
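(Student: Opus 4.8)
The plan is to exploit the fact that the isometry group $\mathcal{I}(\H^2_c)$ acts transitively on ideal triangles, so that the whole statement can be checked on a single convenient configuration, and to perform that check in the upper half-space model $\H^2_u$ of \S\ref{sec:upperhalfspace}, where horocycles and geodesics have elementary Euclidean descriptions. I would first recall these descriptions: in $\H^2_u$ a geodesic is a vertical Euclidean half-line or a Euclidean half-circle centred on the boundary $\R\times\{0\}$; a horocycle with limit point a finite $p\in\R$ is a Euclidean circle internally tangent to $\R\times\{0\}$ at $p$; and a horocycle with limit point $\infty$ is a horizontal line. The orthogonality needed in (ii) is the standard fact that the geodesics ending at a boundary point $p$ are exactly the orthogonal trajectories of the horocycles with limit point $p$ (transparent at $p=\infty$, where these are the vertical and the horizontal lines).

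Since the M\"obius transformations underlying the isometries of these models send generalized circles to generalized circles and preserve tangency and angles (see \S\ref{sec:mobiusgroup}), every object in the lemma --- the horocycles, their mutual tangencies, the geodesics, the circle $\mathcal{C}$, and the orthogonal and tangential incidences --- is preserved by isometries, and $\H^2_u$ may be substituted for $\H^2_c$ because the passage between the two models is itself such a transformation. As $\mathcal{I}(\H^2_c)$ is transitive on triples of distinct boundary points, I may therefore send $(a,b,c)$ to $(\infty,-1,1)$ in $\H^2_u$ and prove the three assertions there; the resulting $\mathcal{C}$ will be bounded, hence avoid the image of $a=\infty$, and so correspond back to an honest Euclidean circle in $\H^2_c$ as claimed.

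Then I would run the computation. For (i), writing $h_a=\{y=t\}$ and taking $h_b$, $h_c$ to be the circles of radii $r_b$, $r_c$ centred at $(-1,r_b)$, $(1,r_c)$, tangency of $h_a$ with $h_b$ forces $t=2r_b$ and with $h_c$ forces $t=2r_c$; the two circles $h_b,h_c$ then sit at the same height with centres at Euclidean distance $2$, so external tangency gives $r_b+r_c=2$, and the unique solution is $t=2$, $r_b=r_c=1$, establishing both existence and uniqueness. For (ii), the geodesic $(ab)$ is the vertical line $x=-1$; it is trivially orthogonal to the horizontal line $h_a$, and, passing through the centre $(-1,1)$ of $h_b$, it meets $h_b$ along a diameter, hence orthogonally, the common point being $(-1,2)=h_a\cap h_b$. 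For (iii), the three tangency points are $h_a\cap h_b=(-1,2)$, $h_a\cap h_c=(1,2)$ and $h_b\cap h_c=(0,1)$; they are non-collinear, so they determine a unique circle $\mathcal{C}$, whose centre lies on $x=0$ by the reflective symmetry $x\mapsto -x$, and a short computation gives centre $(0,2)$ and radius $1$. Tangency to the three geodesics then follows: the distance from $(0,2)$ to each line $x=\pm1$ equals the radius $1$, while the semicircle $(bc)$ of centre $(0,0)$ and radius $1$ is externally tangent to $\mathcal{C}$ since the distance $2$ between their centres equals the sum of radii.

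I expect the only genuine subtlety --- the step to argue rather than merely compute --- to be this reduction: one must justify that verifying the lemma for the single triangle $(\infty,-1,1)$ suffices, i.e. that \emph{horocycle}, \emph{tangency}, and \emph{the circle through three points} are truly M\"obius-invariant notions. Once that is granted the rest is elementary; in particular the uniqueness in (i) uses the observation that, the radii being forced equal, internal tangency of $h_b$ and $h_c$ is impossible, so no competing configuration arises.
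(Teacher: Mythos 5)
Your proof is correct and follows essentially the same route as the paper's: both pass to the upper half-space model with one of the three ideal points sent to $\infty$ and reduce everything to an elementary Euclidean computation with tangent circles and vertical lines. The only difference is cosmetic---you normalize the remaining two points to $\pm 1$ and verify tangency of $\mathcal{C}$ to the geodesics by direct distance computations, while the paper keeps $a,b$ general (with $\|a-b\|=2r$) and deduces the tangency in (iii) from the orthogonality in (ii).
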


\begin{proof}
 In the upper half space model $\mathbb H_u^2$  with $c$ sent to $\infty$ (see Figure~\ref{lemme_upperhalf}) the horocycle $h_c$ becomes the horizontal line $y=2r$, $h_a$ and $h_b$  become circles  tangent  to the boundary $\mathbb R\times\{0\}$ respectively in $a$ and $b$; therefore the horocycles are pairwise tangent if and only if $h_a$, $h_b$ both have radius $r$ and $2r=\|a-b\|$. This proves (i).\\
 \begin{figure}[h]
\centerline{\includegraphics[scale=0.7]{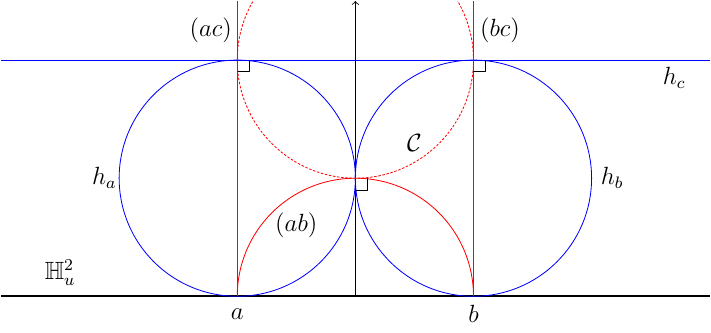}}
\caption{}
\label{lemme_upperhalf}
\end{figure}
  The geodesics $(ac)$ and $(bc)$ become vertical lines $x=a$ and $x=b$ while $(ab)$ is a half-circle with diameter the segment $[a,b]$. One obtains (ii). 
  
  The three tangency points are not aligned, hence there exists a unique circle $\mathcal{C}$ passing through them; it  has Euclidean diameter the segment with extremities $h_a\cap h_c$ and $h_b\cap h_c$, Euclidean radius $r$, and is perpendicular to the three horocycles; with (ii), $\mathcal{C}$ is tangent to the three geodesics. This proves (iii). 
\end{proof} 

\begin{figure}[!h]
\centerline{\includegraphics[scale=0.7]{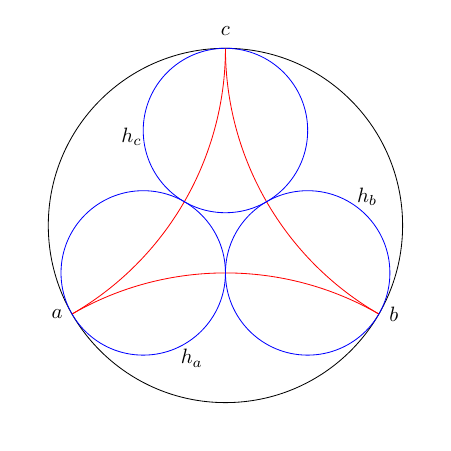}\quad\includegraphics[scale=0.7]{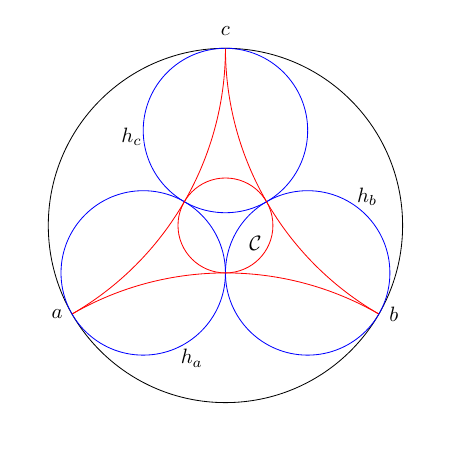}}
\caption{The geodesics $(ab)$, $(ac)$, $(ab)$, horocycles $h_a$, $h_b$, $h_c$ and the circle $\mathcal{C}$ in the conformal disk model.}
\label{mobius0}
\end{figure}

In the conformal disk model $\mathbb{H}_c^2$ the three geodesics $(ab)$, $(ac)$ and $(bc)$ lie in three circles, respectively $\mathcal{C}_{(ab)}$, $\mathcal{C}_{(ac)}$ and $\mathcal{C}_{(bc)}$ of the Euclidean plane. Let $\overline{G}$ be the subgroup of the M\"obius group of $\mathbb{R}^2$ generated by the inversions with respect to   $\mathcal{C}_{(ab)}$, $\mathcal{C}_{(ac)}$,  $\mathcal{C}_{(bc)}$, and the circle $\mathcal{C}$ given by Lemma \ref{jpplem1}, see Figure~\ref{mobius0}. The group $G$ of \S\,\ref{part:Z2*Z2*Z2} identifies with the subgroup generated by the inversions across $\mathcal{C}_{(ab)}$, $\mathcal{C}_{(ac)}$, and $\mathcal{C}_{(bc)}$ and accordingly $\overline{G}$ is generated by $s_{(ab)}$, $s_{(ac)}$, $s_{(bc)}$ and the inversion $s_{\mathcal{C}}$ with respect to $\mathcal{C}$.

Since the inversion with respect to $\mathcal{C}$ preserves a circle/line if and only if $\mathcal{C}$ intersects the circle/line at right angles (cf. \S\ref{sec:mobiusgroup}),  Lemma~\ref{jpplem1} implies that $s_{\mathcal{C}}$ preserves each of the horocycles $h_a$, $h_b$, $h_c$; each one of the hyperbolic reflections $s_{(ab)}$, $s_{(ac)}$ and $s_{(bc)}$ preserves the two horocycles  that intersect their axis (respectively $h_a,h_b$; $h_a, h_c$; and $h_b,h_c$) and moves the remaining one (respectively $h_c$, $h_b$ and $h_a$)  to a horocycle that remains tangent  to the two others. The orbit of  the three horocycles $h_a$, $h_b$, $h_c$ and of $\partial\mathbb{H}^n_c$  under the action of $\overline{G}$  yields a configuration of pairwise tangent or disjoint circles in $\overline{\mathbb{H}}^2_c$, see Figure \ref{mobius}. This configuration is called an \emph{Apollonian gasket} $\mathcal{A}$ and is widely studied in the literature, see for instance~\cite{ap2,ap5,ap1,ap4,ap3}.

 \begin{figure}[!h]
\centerline{\includegraphics[scale=0.4]{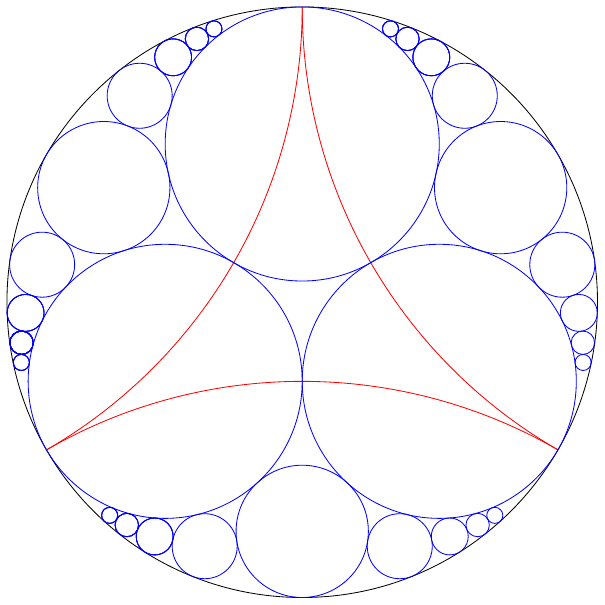}\qquad\qquad
\includegraphics[scale=0.4]{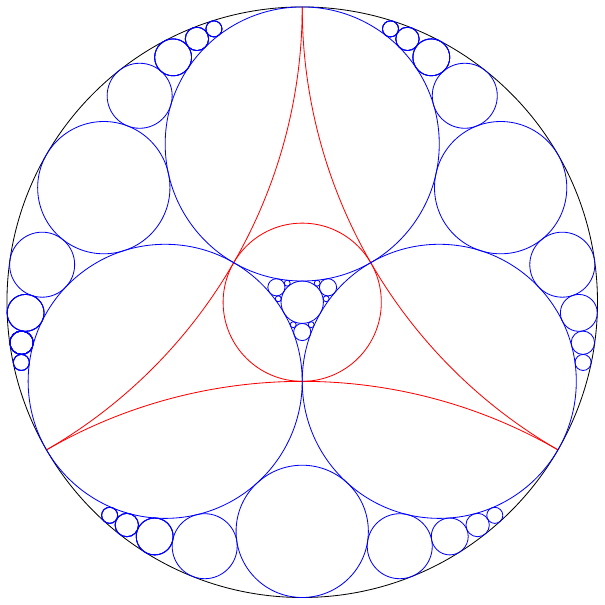}}
\caption{On the left (respectively right) some of
the orbit of the three horocycles (respectively and of the boundary circle) under the action of $G$ (respectively $\overline{G}$). The complete orbit on the right figure yields the \emph{Apollonian gasket}.}
\label{mobius}
\end{figure}

\subsection{Discrete representation in $\mathcal{I}(\mathbb{H}^3)$ of the universal Coxeter group with rank 4}

Consider the universal Coxeter group of rank 4 and its representation as a discrete subgroup of $O_B(V)$ with $(V,B)=\mathbb{R}^{3,1}$ and simple system $\Delta=\{\alpha,\beta,\gamma,\delta\}$ such that  for all distinct $\chi,\xi\in\Delta$, $B(\chi,\xi)=-1$.
In Figures~\ref{fig:intro} and~\ref{dim3fig1} are represented the polytope $\mathrm{conv}(\widehat{\Delta})$ and the unit ball $D_1^3=\widehat{Q^-}$ that we identify here with $\mathbb{H}^3_p$.
 \begin{figure}[!h]
\centerline{\includegraphics[scale=0.9]{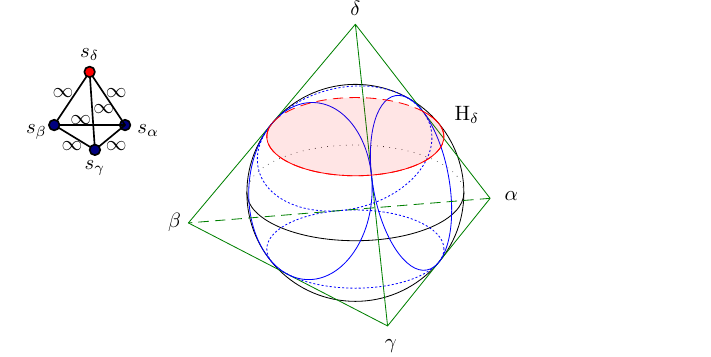}}
\caption{The unit ball $D_1^3$ with interior identified with the projective disk model $\mathbb{H}^3_p$; $\mathrm{conv}(\widehat{\Delta})$ is a regular tetrahedron such that the unit sphere $\partial D_1^3$ passes through the midpoints of its edges. In blue the circles on which $\partial D_1^3$ intersects $\mathrm{conv}(\widehat{\Delta})$. In red the  plane $H_\delta$ in $\mathbb{H}^3_p$ passing through 3 of these points.}
\label{dim3fig1}
\end{figure}

As discussed in \S\,3 that makes the Coxeter group $\Gamma$ act on the projective ball model by isometries (the action is given in Corollary~\ref{prop:projisom}), yielding a discrete faithful representation of $\Gamma$ in $\mathcal{I}(\mathbb{H}^3_p)$.

A direct computation shows that the reflection $s_\alpha$ acts as the hyperbolic reflection across the hyperbolic plane  $\mathrm{H}_\alpha=\alpha^\perp\cap \mathbb{H}_p^3$ passing through the midpoints of the three edges issued from the vertex $\alpha$ of the tetrahedron; indeed, for example, $B(\alpha,\frac{\alpha+\beta}{2})=\frac{1}{2}(B(\alpha,\alpha)+B(\alpha,\beta))=\frac{1}{2}(1-1)=0$ and the same computation shows that the midpoints of the three edges issued from $\alpha$ lie in $\alpha^\perp$. Consider also the reflection planes $H_\beta,\,H_\gamma,\,H_\delta$ (in red in Figure \ref{dim3fig1}) respectively associated to $s_\beta$, $s_\gamma$ and $s_\delta$, passing through the midpoints of the edges adjacent respectively to the vertices $\beta$, $\gamma$ and $\delta$. They are pairwise parallel and non ultra-parallel (they meet on the boundary $\partial\mathbb{H}_p^3$), which can also be seen by $\cosh\,d(H_\alpha,H_\beta)=|B(\alpha,\beta)|=1$.   

The boundary sphere $\partial \mathbb{H}_p^3$ intersects the faces of the tetrahedron along circles; for $\chi=\alpha,\beta,\gamma,\delta$ let us denote  by $h_\chi$ the intersection circle on the face of the tetrahedron opposite to the vertex $\chi$ (in blue in Figure \ref{dim3fig1}).

In the upper half-space model, the planes $H_\alpha$, $H_\beta$ $H_\gamma$ and $H_\delta$  yield a configuration of 4 half-spheres that are pairwise tangent, see Figure~\ref{dim3fig2}. The action of $\Gamma$ restricts on $\partial\mathbb{H}_u^3$  as the action of the subgroup $\overline{G}$ of the M\"obius group on the plane (see \S \ref{sec:Apolloniangasket}).

 \begin{figure}[!h]
\centerline{\includegraphics[scale=0.7]{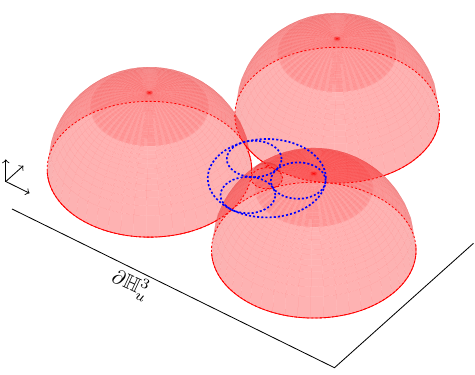}\qquad\quad
\includegraphics[scale=0.7]{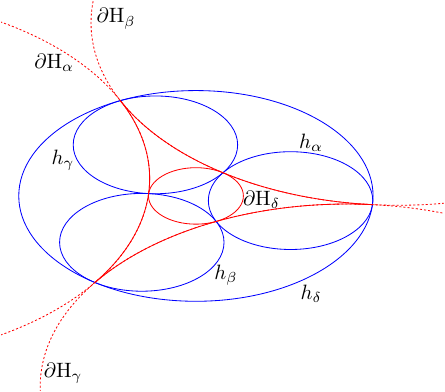}}
\caption{On the left picture: in red the four hyperbolic planes $H_\alpha$, $H_\beta$, $H_\gamma$ and $H_\delta$ (the small one in between) in the upper half-space model $\overline{\mathbb{H}^3_u}$, and in blue the four circles $h_\alpha$, $h_\beta$, $h_\gamma$ and $h_\delta$ in $\partial\mathbb{H}^3_u$. On the right picture their intersections with $\partial\mathbb{H}^3_u$.}
\label{dim3fig2}
\end{figure}

\begin{prop}
The limit set $\Lambda(\Gamma)$ of $\Gamma$ in $\overline{\mathbb{H}^3_u}$ is the closure in $\partial\mathbb{H}^3_u$ of the Apollonian gasket $\mathcal{A}$.
\end{prop}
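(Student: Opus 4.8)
The plan is to work entirely on the boundary and to identify the limit set with the residual set of the gasket. Since $\Gamma$ is discrete, its limit set satisfies $\Lambda(\Gamma)\subseteq\partial\H^3_u$, and by the preceding discussion the action of $\Gamma$ on $\partial\H^3_u\cong\ol{\R^2}$ coincides with that of the M\"obius group $\ol G$ generated by the inversions in the four pairwise tangent circles $\partial\HH_\a,\partial\HH_\b,\partial\HH_\g,\partial\HH_\d$. Recalling from~\S\ref{sec:Apolloniangasket} that $\mathcal A$ is the $\ol G$-orbit of the four mutually tangent circles $h_\a,h_\b,h_\g,h_\d$, its closure $\ol{\mathcal A}$ is the \emph{residual set}: the complement in $\partial\H^3_u$ of the union of the open disks bounded by the circles of $\mathcal A$. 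It therefore suffices to prove $\Lambda(\Gamma)=\ol{\mathcal A}$, which I would do by two inclusions.

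For $\ol{\mathcal A}\subseteq\Lambda(\Gamma)$, I would first observe that, since $B(\chi,\xi)=-1$ for distinct $\chi,\xi\in\Delta$, Remark~\ref{prop:fundamentallink}(ii) tells us that the walls $\HH_\chi$ and $\HH_\xi$ are parallel, meeting only at their common ideal point on $\partial\H^3_u$. Consequently each product $s_\chi s_\xi$ is a parabolic isometry fixing that ideal point, which is precisely a tangency point of two circles of $\mathcal A$. Every tangency point of the gasket is an $\ol G$-translate of one of these, hence a parabolic fixed point of $\Gamma$ and thus a limit point. Since the tangency points are dense in the residual set $\ol{\mathcal A}$ and $\Lambda(\Gamma)$ is closed, we obtain $\ol{\mathcal A}\subseteq\Lambda(\Gamma)$.

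For the reverse inclusion I would show that every open disk $D$ bounded by a circle $h$ of $\mathcal A$ lies in the ordinary set $\Omega:=\partial\H^3_u\setminus\Lambda(\Gamma)$. Each generating inversion of $\ol G$ either fixes $h$ (when its circle is orthogonal to $h$) or carries $h$ to another, disjoint or tangent, gasket circle; hence the stabiliser of $h$ in $\Gamma$ acts on $D$ as a discrete group of isometries of the hyperbolic plane $D$ (with ideal boundary $h$), whose accumulation set lies on $h\subseteq\ol{\mathcal A}$, while the $\Gamma$-translates of $D$ that do not fix it are gasket disks of diameter tending to $0$, so only finitely many of them meet a given compact subset of $D$. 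Therefore $\Gamma$ acts properly discontinuously on $D$ and $D\subseteq\Omega$. As the open gasket disks exhaust $\partial\H^3_u\setminus\ol{\mathcal A}$, this yields $\Lambda(\Gamma)\subseteq\ol{\mathcal A}$ and completes the proof.

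The hard part is the second inclusion, and specifically the two geometric facts feeding it: that the $\ol G$-translates of the fundamental region (equivalently, the nested gasket disks obtained along a reduced word in the four generators) have diameters shrinking to $0$, and that the resulting tiling matches the recursive subdivision defining the Apollonian gasket. The shrinking is not uniform near the tangency points, where reduced words of the form $(s_\chi s_\xi)^n$ are parabolic; these must be treated separately, their orbits accumulating exactly at the corresponding ideal tangency points, which already lie in $\ol{\mathcal A}$. I note finally that the equality $\Lambda(\Gamma)=\ol{\mathcal A}$ is also an instance of Theorem~\ref{thm:main}, which identifies $\Lambda(\Gamma)$ with the set of limit roots $E(\Phi)$; one could alternatively establish $E(\Phi)=\ol{\mathcal A}$ by a direct combinatorial analysis of the normalized roots.
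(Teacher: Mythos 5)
Your decomposition into the two inclusions $\ol{\mathcal A}\subseteq\Lambda(\Gamma)$ and $\Lambda(\Gamma)\subseteq\ol{\mathcal A}$ is the right one, but the argument as written has a genuine gap exactly where you flag ``the hard part'': the reverse inclusion is not actually proved. You reduce it to showing that every open gasket disk $D$ lies in the ordinary set, which in turn rests on (a) the claim that the $\Gamma$-translates of $D$ by elements outside its stabiliser are again gasket disks disjoint from $D$ (a priori $g(D)$ could be the disk on the \emph{other} side of the image circle, so this needs an orientation or normal-form argument), and (b) the shrinking of diameters of translates, which you correctly observe fails to be uniform near the parabolic tangency points and which you do not establish. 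The paper avoids all of this with one soft observation that is missing from your proposal: since $\Lambda(\Gamma)$ is infinite, $\Gamma$ is non-elementary, and for a non-elementary discrete group the limit set is the \emph{smallest} nonempty closed $\Gamma$-invariant subset of the boundary (Theorem 12.1.3 of \cite{Ra06}). As $\ol{\mathcal A}$ is nonempty, closed and $\Gamma$-invariant, the inclusion $\Lambda(\Gamma)\subseteq\ol{\mathcal A}$ is immediate. Replacing your second paragraph by this minimality argument closes the gap.

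Your first inclusion also differs from the paper's and leans on an unproved assertion. The identification of the tangency points as parabolic fixed points is correct (for distinct simple roots the walls are parallel by Remark~\ref{prop:fundamentallink}(ii), so the product of the two reflections is parabolic), but you then invoke the density of the tangency points in the residual set $\ol{\mathcal A}$; this is true but is itself a nontrivial property of the Apollonian gasket (essentially that the tangency points on each fixed gasket circle are dense in that circle) and would need its own proof. The paper instead gets each whole circle $h_\delta$ into $\Lambda(\Gamma)$ at once: starting from a single parabolic fixed point $x_0$ in the disk bounded by $\partial\HH_\delta$, the subgroup generated by the reflections in $\HH_\alpha,\HH_\beta,\HH_\gamma$ acts on that disk as the rank-$3$ universal Coxeter group of \S\ref{part:Z2*Z2*Z2}, whose limit set is the full boundary circle by Lemma~\ref{limitsetG}; hence $h_\delta\subseteq\Lambda(\Gamma)$, and likewise for $h_\alpha,h_\beta,h_\gamma$ after conjugation. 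Taking the $\Gamma$-orbit and the closure then yields $\ol{\mathcal A}\subseteq\Lambda(\Gamma)$ without any density claim about tangency points.
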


\begin{proof} The two hyperplanes $H_\alpha$ and $H_\delta$ are parallel so denote by $x_0\in\partial\mathbb{H}^3_u$ their asymptotic point. Hence the composite of the two reflections with respect to $H_\alpha$ and $H_\delta$ is a transformation of parabolic type  with limit point $x_0\in\partial\mathbb{H}^3_u$ (cf. Propositions A.5.12 and A.5.14 of \cite{BP92}). According to Theorem 12.1.1 of \cite{Ra06}, $x_0\in\Lambda(\Gamma)$. Note that $x_0$ lies in the interior disk of $\mathbb{R}^2 \simeq \partial\mathbb{H}^3_u\setminus\{\infty\}$ delimited by the circle $h_\delta$.

 The action of $\Gamma$ on $\mathbb{H}^3_u$ naturally extends to a conformal action on $\overline{\mathbb{H}^3_u}$  that  is the Poincar\'e extension of the action of the subgroup $\overline{G}$ of the M\" obius group on  $\partial\mathbb{H}^3_u\simeq\overline{\mathbb{R}^2}$ (cf. \S\ref{sec:upperhalfspace} as well as Theorem 4.4.1 of \cite{Ra06}). As in \S \ref{sec:Apolloniangasket} denote by $G$ the subgroup of $\overline{G}$ generated by the three inversions with respect to the spheres $\partial H_\alpha$, $\partial H_\beta$ and $\partial H_\gamma$; $G$ acts on the interior of the disk delimited by $\partial H_\delta$ as the group $G$ of \S \ref{part:Z2*Z2*Z2} acts on $\mathbb{H}^2_c$. In particular $\Lambda(\Gamma)$ contains $\overline{G\cdot x_0}\setminus G\cdot x_0$ that equals $h_\delta$ (Proposition \ref{limitsetG}).
 
 After conjugating $\Gamma$ by the reflection with respect to $H_\alpha$ (respectively $H_\beta$, $H_\gamma$) the same argument applies to show that $\Lambda(\Gamma)$ contains also $h_\alpha$ (respectively $h_\beta$, $h_\gamma$). Hence the Apollonian gasket $\mathcal{A}$, as seen in \S \ref{sec:Apolloniangasket}, which is the orbit of $h_\alpha\cup h_b\cup h_\gamma\cup h_\delta$ under the action of $\overline{G}$, is a $\Gamma$-invariant subset of $\Lambda(\Gamma)$; since $\Lambda(\Gamma)$ is closed in $\partial\mathbb{H}^3_u$ (Theorem 12.1.2, Corollary 1 of \cite{Ra06}) the closure $\overline{\mathcal{A}}$ of $\mathcal{A}$ in $\partial\mathbb{H}^3_u$ is a closed $\Gamma$-invariant subset of $\partial\mathbb{H}^3_u$ contained in $\Lambda(\Gamma)$. Since $\Lambda(\Gamma)$ is infinite, $\Gamma$ is non elementary (cf. Theorem 12.2.1 of \cite{Ra06}) and therefore any closed $\Gamma$-invariant subset of $\partial \mathbb{H}^3_u$ contains $\Lambda(\Gamma)$ (Theorem 12.1.3 of \cite{Ra06}). Hence $\Lambda(\Gamma)$ equals $\overline{\mathcal{A}}$.
\end{proof}

 \begin{figure}[!h]
{\includegraphics[width=\textwidth]{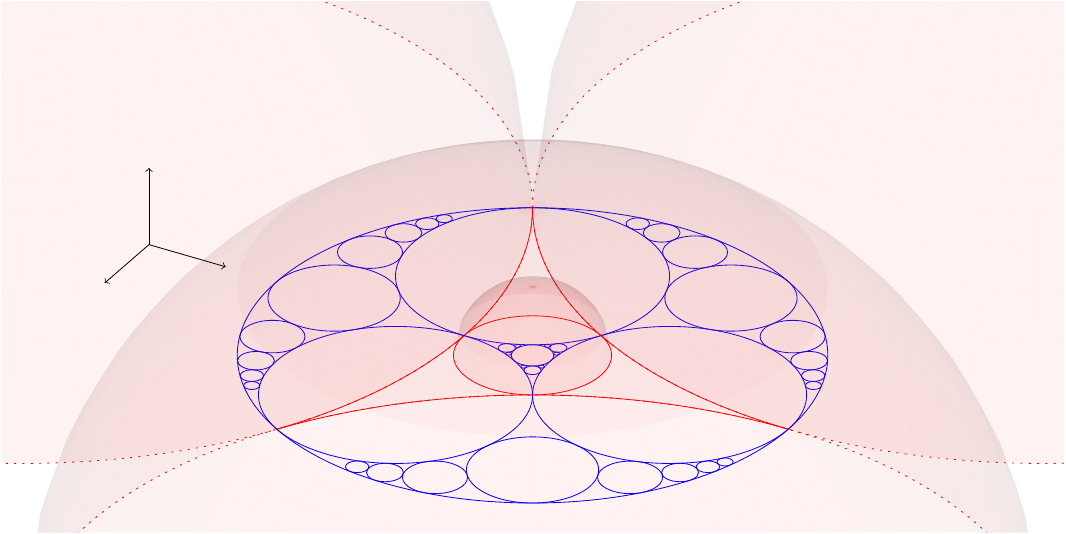}\qquad
}
\caption{The Apollonian gasket in $\partial\mathbb{H}^3_u$ whose closure is the limit set of $\Gamma$. In both the conformal and projective ball models one obtains as limit set the Apollonian packing of the sphere as in Figure \ref{fig:intro}.}
\label{dim3fig3}
\end{figure}

The closure $\overline{\mathcal{A}}$ of $\mathcal{A}$ is also the complement in the closed external disk (delimited by $h_\delta$) of the union of the interiors of disks delimited by the circles of the gasket (see~\cite[Theorem 4.10]{DyHoRi13} for an analogous property applying to the limit set of any discrete group generated by hyperbolic reflections).

\subsection*{Funding} During this work the first author was supported by a NSERC  grant and the third author was supported by a postdoctoral fellowship from  LaCIM. This collaboration was also made possible with the support of the UMI CNRS-CRM.

\subsection*{Acknowledgments} The authors wish to thank Jean-Philippe
Labb\'e who made the first version of the Sage and TikZ functions used to
compute and draw the normalized roots. The second author wishes to thank Pierre de la Harpe for his invitation to come to Geneva in June 2013 and for his comments on this article. The third author is grateful to Pierre Py for fruitful discussions in Strasbourg in November 2012. We also acknowledge the participation of  Nadia Lafreni\`ere and Jonathan Durand Burcombe to a LaCIM undergrad summer research award on this theme during the summer 2012.

The authors wish to warmly thank the anonymous referee for his/her helpful comments that improved the quality of this article. 



\newcommand{\etalchar}[1]{$^{#1}$}

\end{document}